\definecolor{webgreen}{rgb}{0,.5,0}
\definecolor{webbrown}{rgb}{.6,0,0}
\newcommand{\seqnum}[1]{\href{https://oeis.org/#1}{\rm \underline{#1}}}
\begin{document}

\theoremstyle{plain}
\newtheorem{theorem}{Theorem}
\newtheorem{corollary}[theorem]{Corollary}
\newtheorem{lemma}[theorem]{Lemma}
\newtheorem{proposition}[theorem]{Proposition}

\theoremstyle{definition}
\newtheorem{definition}[theorem]{Definition}
\newtheorem{example}[theorem]{Example}
\newtheorem{conjecture}[theorem]{Conjecture}
\newtheorem{question}[theorem]{Question}

\theoremstyle{remark}
\newtheorem{remark}[theorem]{Remark}

\begin{center}
\vskip 1cm{\LARGE\bf Bubbles in Linear Chord Diagrams: Bridges
  and Crystallized Diagrams}
\vskip 1cm 
\large
Donovan Young\\
St Albans, Hertfordshire, AL3 4HB\\
United Kingdom\\
\href{mailto:donovan.m.young@gmail.com}{\tt donovan.m.young@gmail.com} \\
\end{center}

\vskip .2 in

\begin{abstract}
In a linear chord diagram a short chord joins adjacent vertices while
a bubble is a region devoid of short chords. We define a bridge to be
a chord joining a vertex interior to a bubble to one exterior to
it. Building on earlier work, we investigate the distribution of
bridges in the limit of large bubbles and diagrams, and show that the
number of bridges is asymptotically normal, obtaining expressions for
the associated mean and variance as a function of bubble size. We
introduce the notion of a crystallized diagram, defined by the
criteria that all its chords are either short or are bridges. We count
the number of crystallized diagrams by the number of short chords they
contain, and provide the asymptotic distribution in the limit of large
crystallized diagrams. We show that for very large diagrams, the
number of short chords is normal, and sharply peaked at $\sqrt{2n/\log
  n}$, where $n$ is the total number of chords in the diagram.
\end{abstract}

\section{Introduction}

A linear chord diagram is a linear arrangement of $2n$ vertices
connected in pairs by $n$ unoriented arcs called {\it chords}, so that
each vertex participates in exactly one chord. Elementary
considerations reveal that there are $(2n-1)!!=(2n-1)(2n-3)\cdots 1$
distinct linear chord diagrams consisting of $n$ chords.

One interesting way of refining this counting\footnote{The
  combinatorics of linear chord diagrams has a long history beginning
  with Touchard \cite{T} and Riordan's \cite{R} studies of the number
  of chord crossings; cf.\ Pilaud and Ru\'{e} \cite{PR} for a modern
  approach and further developments. Krasko and Omelchenko \cite{KO}
  provide a more complete list of references.} is by the number of
so-called {\it short} chords, i.e., chords that join adjacent
vertices. Kreweras and Poupard~\cite{KP} provided recurrence relations
and closed form expressions for the number of diagrams with exactly
$k$ short chords. They also showed that the mean number of short
chords is $1$, which implies that the total number of short chords is
equinumerous with the total number of linear chord diagrams,
cf.\ \cite{CK}. Kreweras and Poupard~\cite{KP} showed further that all
higher factorial moments of the distribution approach $1$ in the
$n\to\infty$ limit, thus establishing the Poisson nature of the
asymptotic distribution.

In a previous publication the present author~\cite{DY4} defined the
concept of a {\it bubble} in a linear chord diagram. This is a
non-empty set of adjacent vertices, bounded either by the ends of the
diagram or by short chords, which itself is devoid of short chords,
see Figure \ref{n2}. In that paper, bubbles were enumerated by their
size (i.e. by the number of vertices they contain), and the form of
the asymptotic distribution of these sizes was determined.

In the present paper we will refine this counting by the number of
chords which connect vertices inside a bubble to regions outside of
it. We call these chords {\it bridges}. We may count bubbles by the
number of bridges contained within them. Let $N_{q,b}(n)$ be the
number of bubbles of size $q\in [1,2n]$, with exactly $b \in [0,q]$
bridges, counted across all diagrams consisting of $n$ chords. For
fixed $n$, the maximum value of $b$ occurs when the diagram contains a
single, central short chord and hence two bubbles of size $n-1$
connected by $n-1$ bridges (cf.\ the middle diagram in Figure
\ref{n2}). We therefore have that $N_{q,b}(n)$ is a $2n \times n$
array. For example, we have that:
\begin{equation}\nonumber
N_{q,b}(2)=
\begin{pmatrix}
0 &2 \\ 0 &0 \\ 0 &0 \\ 1 &0 \\
\end{pmatrix}.
\end{equation}
In order to see this counting, consider the three diagrams consisting
of $2$ chords, as pictured in Figure \ref{n2}.
\begin{figure}[H]
\begin{center}
\includegraphics[width=4.in]{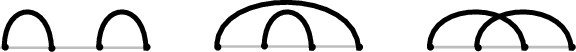}
\end{center}
\caption{The three linear chord diagrams built from $2$ chords. The
  diagram on the left has no bubbles. The diagram in the middle has
  two bubbles of size $1$, both of which contain a (common)
  bridge. The diagram on the right is itself a bubble of size $4$ and
  therefore contains no bridges.}
\label{n2}
\end{figure}
\noindent The diagram which is itself a bubble of size $4$ clearly has
no bridges, hence $N_{4,0}(2)=1$. The only other diagram which has
bubbles has a short chord across the two central vertices, as
previously mentioned, and two bubbles of size $1$ to either side, each
of which consists of a single vertex connected by a bridge. We
therefore have that $N_{1,1}(2)=2$. The counts for the next few values
of $n$ are as follows:
\begin{equation}\nonumber
N_{q,b}(3)=
\begin{pmatrix}
0 &8 &0\\ 0 &0 &4\\ 0 &2 &0\\ 2 &0 &0\\ 0 &0 &0\\ 5 &0
&0\\
\end{pmatrix},
N_{q,b}(4)=
\begin{pmatrix}
0 &42 &0 &0\\
0 &0 &30 &0\\
0 &8 &0 &12\\
3 &0 &12 &0\\
0 &12 &0 &0\\
10 &0 &0 &0\\
0 &0 &0 &0\\
36 &0 &0 &0\\
\end{pmatrix},
N_{q,b}(5)=
\begin{pmatrix}
0 &300 &0 &0 &0\\

0 &0 &240 &0 &0\\

0 &42 &0 &144 &0\\

9 &0 &90 &0 &48\\

0 &48 &0 &72 &0\\

15 &0 &84 &0 &0\\

0 &82 &0 &0 &0\\

72 &0 &0 &0 &0\\

0 &0 &0 &0 &0\\

329 &0 &0 &0 &0\\
\end{pmatrix}.
\end{equation}
There are a few general remarks to be made about these numbers. First
of all, we note that a bubble of size $1$ has exactly one bridge, and
so the first row is always zero except in the second entry. As a
bubble's size increases from $1$, its capacity to support bridges
increases as well. At a certain point, however, as a bubble becomes
large, it occupies more of the diagram's available vertices, and
thereby decreases the capacity for chords which bridge the interior
and exterior of the bubble, because there are fewer exterior vertices
available. The precise relationship between the mean number of bridges
and the size of the bubble, in the limit of large diagrams and
bubbles, will be found below.

The sum of the $q^\text{th}$ row is the total number of bubbles of
size $q$, given by \seqnum{A367000}. A diagram without short chords
can be regarded as a bubble of size $2n$ and clearly has no
bridges. For this reason the last row consists of zeroes except for
the first entry, which is the corresponding term of \seqnum{A278990},
the number of diagrams without short chords. A bubble of size $2n-1$
is impossible, as a short chord occupies two vertices, hence the
penultimate row is always a row of zeroes.

\begin{theorem}\label{ThmBbar}
  Let $\bar b$ denote the mean number of bridges counted across all
  bubbles of size $q$ found in linear chord diagrams with $n$
  chords. Then
  \begin{equation}\label{bbar}
    \lim_{n,q\to\infty}\bar b = \frac{q(2n-q)}{2n}.
  \end{equation}
\end{theorem}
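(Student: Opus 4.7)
The plan is to express $\bar b$ as the ratio of the number of (diagram, size-$q$ bubble, bridge) triples to the number of (diagram, size-$q$ bubble) pairs, decompose by the bubble's position, and reduce both numerator and denominator to matching counts with a ``no short chord inside the bubble'' constraint whose effect is captured by a Poisson factor that cancels in the ratio.

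First I would decompose by bubble position. For $q<2n$, a size-$q$ bubble sits at the left end, at the right end, or at an interior location bounded on both sides by short chords. After the boundary short-chord vertices are removed, what remains is a free matching of $q+M$ vertices with $M=2n-q-2$ for the two end positions and $M=2n-q-4$ for the $2n-q-3$ interior positions, subject only to the constraint that no pair of adjacent bubble vertices is matched together. Writing $f(q,M)$ for the number of matchings satisfying this constraint and $g(q,M)=\sum_{v=1}^{q}\#\{\text{valid matchings with bubble vertex $v$ paired externally}\}$ for the number of (valid matching, bridge) pairs, the number of size-$q$ bubbles equals $2f(q,2n-q-2)+(2n-q-3)f(q,2n-q-4)$, and $\bar b$ is the analogous ratio with $g$ replacing $f$ in the numerator.

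Next I would compute $f$ and $g$ by inclusion-exclusion over the $q-1$ forbidden adjacent bubble pairs. For $f$ one obtains the standard formula
\begin{equation}\nonumber
f(q,M)=\sum_{k\ge 0}(-1)^{k}\binom{q-k}{k}(M+q-2k-1)!!,
\end{equation}
while for $g$, symmetry among the $M$ exterior vertices reduces the inner count to $M$ times the number of valid matchings with $v$ paired to a designated exterior vertex, and the forbidden-pair graph for the residual matching splits into two paths on $v-1$ and $q-v$ vertices, yielding a parallel inclusion-exclusion sum. Dividing by $(M+q-1)!!$ and invoking $(M+q-2k-1)!!/(M+q-1)!!\sim(M+q)^{-k}$ together with $\binom{q-k}{k}\sim q^{k}/k!$, the series for $f(q,M)/(M+q-1)!!$ converges as $n,q\to\infty$ to $\exp(-q/(M+q))$; this says that in an unconditioned uniform matching of $M+q$ vertices, the number of short chords landing inside the bubble is asymptotically Poisson with mean $q/(M+q)$. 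The analogous series for $g(q,M)/(M+q-1)!!$ factorizes as $q\cdot M/(M+q-1)$ times the same exponential, since removing $v$ from the bubble perturbs the Poisson rate by only $O(1/n)$.

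In the ratio the exponential factors cancel and one obtains $g(q,M)/f(q,M)\sim qM/(M+q)$. Substituting $M=2n-q-4$ yields $q(2n-q-4)/(2n-4)\to q(2n-q)/(2n)$; the boundary contribution with $M=2n-q-2$ gives the same limit and is in any case suppressed by a factor of $n$ in the position count. The chief technical obstacle is making the Poisson approximation uniform when $q\sim cn$ with $c\in(0,2)$, where the rate $q/(M+q)$ remains bounded away from $0$ and the inclusion-exclusion series does not truncate; this is handled by the termwise bound $\binom{q-k}{k}(M+q-2k-1)!!/(M+q-1)!!\le(q/(M+q))^{k}/k!$ and dominated convergence.
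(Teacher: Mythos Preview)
Your approach is correct and genuinely different from the paper's. The paper builds an explicit asymptotic model for the full bivariate count $N_{q,b}(n)$, applies Stirling's approximation to $\log N_{q,b}(n)$, and locates the mode in $b$ by solving $\partial_b \log N_{q,b}(n)=0$; the mean then coincides with the mode because the paper simultaneously establishes that $b$ is asymptotically normal, and it reads off the variance $q^2(2n-q)^2/(4n^3)$ from the second derivative. Your route is more direct for the first moment alone: you write $\bar b$ as a ratio of two inclusion--exclusion sums, recognise the common Poisson factor $\exp\bigl(-q/(M+q)\bigr)$ coming from the ``no short chord inside the bubble'' constraint, and let it cancel to leave $qM/(M+q)$. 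This avoids modelling the full distribution in $b$ and is arguably cleaner for just the mean, but it does not deliver the variance or normality that the paper's saddle-point calculation produces as by-products.

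One small correction: the end-of-diagram contribution is not suppressed by a factor of $n$ relative to the interior contribution. A single interior position carries weight $(2n-5)!!\,e^{-\lambda}$ while an end position carries $(2n-3)!!\,e^{-\lambda}=(2n-3)(2n-5)!!\,e^{-\lambda}$, so the $2n-q-3$ interior positions together contribute only a bounded multiple of the two end positions when $q\sim cn$ with $c\in(0,2)$. This does not affect your conclusion, since (as you already observe) both contributions yield the same limiting ratio $g/f\to q(2n-q)/(2n)$, and a weighted average of equal limits is that common limit.
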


We will provide a proof of this theorem below, but first we will
present a heuristic argument. Consider a bubble $B$ of size
$q=\mathcal{O}(n)$ bounded by a short chord and consider the vertex
$v_1$ immediately outside the bubble, see Figure \ref{FigIntuit}. We
then increase the bubble size by $1$ by exchanging the positions of
the short chord with $v_1$, so that $v_1$ is subsumed into the bubble.
\begin{figure}[H]
\begin{center}
\includegraphics[width=4.in]{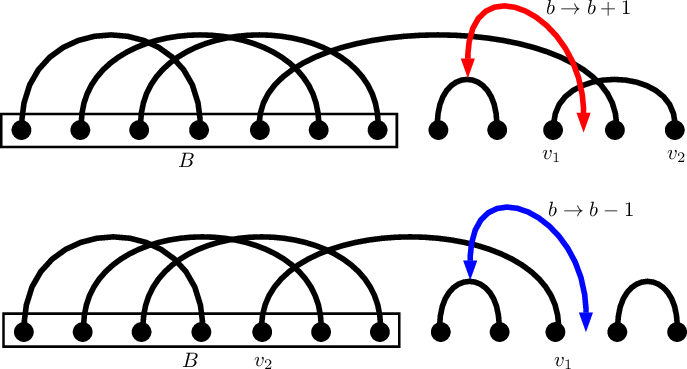}
\end{center}
\caption{When a short chord bounding a bubble $B$ exchanges its
  position with the exterior vertex $v_1$ which is adjacent to it, the
  number of bridges in $B$ either increases (or decreases) by one. The
  former case is pictured on top and occurs when the vertex $v_2$
  (joined to $v_1$ via a chord) is exterior to $B$. The latter case is
  pictured below and occurs when $v_2$ is within $B$.}
\label{FigIntuit}
\end{figure}
\noindent We now consider the vertex
$v_2$ which is connected to $v_1$ via a chord. If $v_2$ is within the
original bubble, the exchange reduces the number of bridges by one,
whereas if $v_2$ is outside the bubble the exchange increases the
number of bridges by one. We therefore have that
\begin{equation}\nonumber
  \frac{\Delta \bar b}{\Delta q} \simeq P(v_2\notin B) - P(v_2\in B)
  \simeq \frac{2n-q}{2n} - \frac{q}{2n},
\end{equation}
where we have used the fact that for large $n$, the position of $v_2$
is approximately uniformly distributed. Integrating this result, and
applying the boundary condition that a bubble of size $2n$ has exactly
zero bridges, we obtain Equation (\ref{bbar}).

\section{Asymptotics}

We introduce a model for approximating $N_{q,b}(n)$, valid for bubbles
of size $q={\cal O}(n)$, and with $b={\cal O}(n)$ bridges, as $n$ is
taken to infinity.
\begin{equation}\label{modeleq}
\begin{split}
N_{q,b}(n)\simeq
  2&\sum_{p=0}^b
    \frac{\left(2(n-1)-q\right)_b
 \left(2(n-1)-(q+b)-1\right)!!
      (q-b-1)!! 
   }{ep!} \\
   & \qquad \times\sum_{b_0=0}^{b-p}
{b_0+q-b-p\choose b_0}
{b-b_0-1\choose p-1}.
  \end{split}
\end{equation}
We explain each of the factors in this expression in turn, starting
from the left and moving right.
\begin{figure}[H]
\begin{center}
\includegraphics[width=4.in]{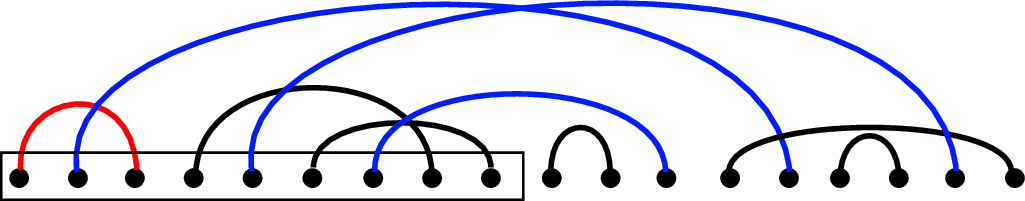}
\end{center}
\caption{A specific diagram of the class whose count is approximated
  by the model given in Eq. (\ref{modeleq}), with $n=9$. The bubble
under consideration is indicated by the boxed vertices and has size
$q=9$. There are $b=3$ bridges emanating from the bubble, indicated in
blue. The bubble has $p=1$ would-be short chords (indicated in red),
and since this chord contains one of the bridges' endpoints, there are
$b_0=2$ remaining endpoints within the bubble.}
\label{intuit}
\end{figure}
\begin{enumerate}

\item We take a bubble of size $q$ to be bounded by a single short
  chord, and one end of the diagram. We will explain why this captures
  the main contribution to $N_{q,b}(n)$ below this list. There is
  therefore a factor of $2$ to count the two ends.
  
\item\label{tr} Each of the $b$ bridges may terminate outside the
  bubble on any of the $2(n-1)-q$ external vertices. We will treat the
  bridges as indistinguishable in what follows, and so count them here
  as distinguishable: there are therefore $(2(n-1)-q)_b =
  (2(n-1)-q)!/(2(n-1)-q-b)!$ ways of placing them.

\item\label{tq} The $2(n-1)-q-b$ vertices outside the bubble, not
  participating in any of the $b$ bridges, may be connected in any of
  $(2(n-1)-q-b-1)!!$ ways.

\item The $q-b$ vertices inside the bubble, not participating in any
  of the $b$ bridges, may be connected in any of $(q-b-1)!!$ ways. If
  we were to delete the $b$ internal vertices participating in the
  bridges, then amongst these $(q-b-1)!!$ configurations, according to
  the work of Kreweras and Poupard \cite{KP}, approximately
  $e^{-1}(q-b-1)!!/p!$ of them would contain $p$ short chords. Note
  that we may take $p={\cal O}(1)$ as the factor $1/p!$ suppresses
  large values of $p$.

\item Of the $b$ vertices within the bubble participating in bridges,
  let $b_0$ of them lie outside any of the $p$ would-be short
  chords. The enumeration of placements of the $b_0$ vertices is then
  the problem of placing $b_0$ indistinguishable balls into $q-b-p+1$
  distinguishable bins (the spaces between the vertices). This
  accounts for the factor of ${b_0+q-b-p\choose b_0}$.

\item The remaining $b-b_0$ vertices must be placed within the $p$
  would-be short chords. The enumeration here is the placement of
  $b-b_0$ indistinguishable balls into $p$ distinguishable bins, so
  that no bin is empty. This accounts for the factor of
  ${b-b_0-1\choose p-1}$.
\end{enumerate}
If we were to include contributions from bubbles bounded by two short
chords, then the factors in items \ref{tr} and \ref{tq} would have $n$
replaced with $n-1$, bringing about a relative suppression of ${\cal
  O}(n^{-2})$. Although we would then also sum over the position of
the bubble, this would only provide an additional factor of ${\cal
  O}(n)$.

Strictly speaking, we should impose the constraints $b\geq p$, $q \geq
3p$, as all $p$ would-be short chords must be punctured by at least
one bridge each. Since we take $b$ and $q$ to be ${\cal O}(n)$ and $p$
to be ${\cal O}(1)$, we assume these constraints to be satisfied. In
Equation (\ref{modeleq}), the sum over $b_0$ may be performed to give
\begin{equation}\nonumber
N_{q,b}(n)\simeq 2
    \frac{\left(2(n-1)-q\right)_b
 \left(2(n-1)-(q+b)-1\right)!!
      (q-b-1)!! 
   }{e} \sum_{p=0}^b\frac{1}{p!}{q-p\choose b-p},
\end{equation}
but since $q$ and $b$ are ${\cal O}(n)$ and $p$ is ${\cal O}(1)$, we have
\begin{equation}\nonumber
N_{q,b}(n)\simeq 2
    \frac{\left(2(n-1)-q\right)_b
 \left(2(n-1)-(q+b)-1\right)!!
      (q-b-1)!! 
    }{e} {q\choose b}
    \sum_{p=0}^b  \frac{1}{p!}\left(1 + {\cal O}(n^{-1}) \right).
\end{equation}
Concentrating on the leading contribution, and approximating $\sum_p
1/p! \simeq e$, we have
\begin{equation}\nonumber
N_{q,b}(n)\simeq
  2\frac{(2(n-1)-q)!}{(2(n-1)-q-b)!}\frac{(2(n-1)-(q+b))!}{2^{n-1-(q+b)/2}
    (n-1-(q+b)/2)!}
  \frac{(q-b)!}{2^{(q-b)/2}
    ((q-b)/2)!}
  \frac{q!}{b!(q-b)!},
\end{equation}
where we have used $(2n-1)!!=(2n)!/(2^nn!)$. Simplifying further we find
\begin{equation}\nonumber
N_{q,b}(n)\simeq
  \frac{1}{2^{n-1-(q+b)/2}
    (n-1-(q+b)/2)!}
  \frac{2(2(n-1)-q)!}{2^{(q-b)/2}
    ((q-b)/2)!}
  \frac{q!}{b!}.
\end{equation}
We now use this expression to provide a proof of Theorem \ref{ThmBbar}.
\begin{proof}
We will show that the number of bridges $b$ is asymptotically normal
with a mean given by Equation (\ref{bbar}) and a variance which we
will presently determine. We employ the Stirling approximation $N!
\simeq \sqrt{2\pi N}(N/e)^N$, and take a logarithm of $N_{q,b}(n)$. We
will only be interested in those terms dependent on $b$, with all
others being treated as constants:
\begin{equation}\nonumber
  \begin{split}
\log N_{q,b}(n)\simeq
    &-(n-1-(q+b)/2)\log 2-((q-b)/2)\log 2\\
    &- (n-1-(q+b)/2 + 1/2)\left(\log(n-1-(q+b)/2)-1\right)
  \\
    &-((q-b)/2+1/2)\left(\log((q-b)/2)-1\right)
    -(b+1/2)\left(\log b-1\right) + \text{const.}
    \end{split}
\end{equation}
The mean value of $b$ is found by imposing $\frac{\partial}{\partial
  b}\log N_{q,b}(n)=0$. We have that
\begin{equation}\nonumber
  \begin{split}
&\frac{\partial}{\partial b}\log N_{q,b}(n)\simeq
    \frac{1}{2}\log 2
    +\frac{1}{2}\left(\log(n-1-(q+b)/2)-1\right) +\frac{1}{2}\\
  &+\frac{1}{2}\log 2
    +\frac{1}{2}\left(\log((q-b)/2)-1\right) +\frac{1}{2}
    -\left(\log b-1\right) -1 + {\cal O}(n^{-1})\\
    &=\frac{1}{2}\log\left(\frac{\left(2(n-1)-(q+b)\right)
      \left(q-b\right)}{b^2}\right)+ {\cal O}(n^{-1}),
  \end{split}
\end{equation}
and so the leading contribution to the mean value of $b$ is found by imposing
\begin{equation}\nonumber
\frac{\left(2(n-1)-(q+b)\right)
      \left(q-b\right)}{b^2}=1.
\end{equation}
Solving for $b$ we find
\begin{equation}\label{EqBbar}
\bar b = \frac{q\left(2(n-1)-q\right)}{2(n-1)},
\end{equation}
which is consistent with Equation (\ref{bbar}) in the $n\to\infty$
limit. It is also ${\cal O}(n)$ which is consistent with our
approximations. The variance of $b$ can be found by computing
\begin{equation}
  \begin{split}
  \left(
    -\frac{\partial^2}{\partial
      b^2}\log N_{q,\bar b}(n)\right)^{-1}
    &=\left(\frac{1}{2\left(2(n-1)+(q+\bar b)\right)}
    +\frac{1}{2\left(q-\bar b\right)}+\frac{1}{\bar b}\right)^{-1}\\
    &=\frac{q^2\left(2(n-1)-q\right)^2}{4(n-1)^3},\label{EqVarb}
    \end{split}
\end{equation}
which tells us that the standard deviation of $b$ is ${\cal
  O}(\sqrt{n})$. Generically one finds that higher derivatives are
suppressed as follows
\begin{equation}\nonumber
\frac{\partial^p}{\partial
      b^p}\log N_{q,\bar b}(n)  = {\cal O}(n^{-p+1}),
\end{equation}
and since this decreases faster than $\sqrt{\text{Var}(b)}^{-p}={\cal
  O}(n^{-p/2})$, normality follows in the $n\to\infty$ limit.
\end{proof}
In Figure \ref{FigMeanVar}, we show the agreement between Equations
(\ref{EqBbar}) and (\ref{EqVarb}) and the exact mean and variance for
the case of $n=100$.
\begin{figure}[H]
\begin{center}
  \includegraphics[width=3.in]{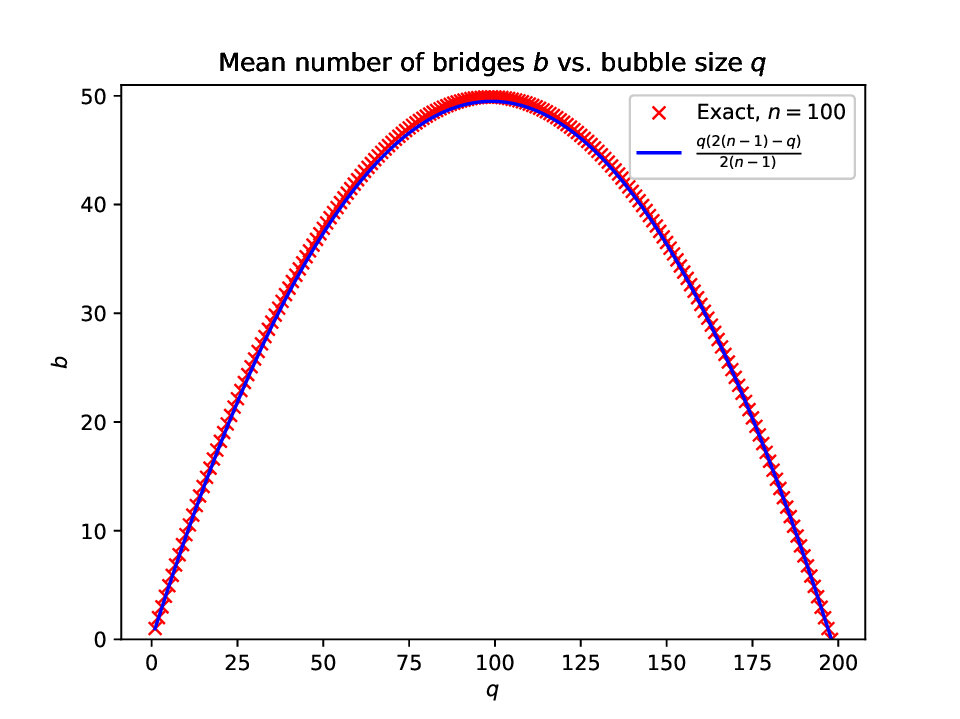}
  \includegraphics[width=3.in]{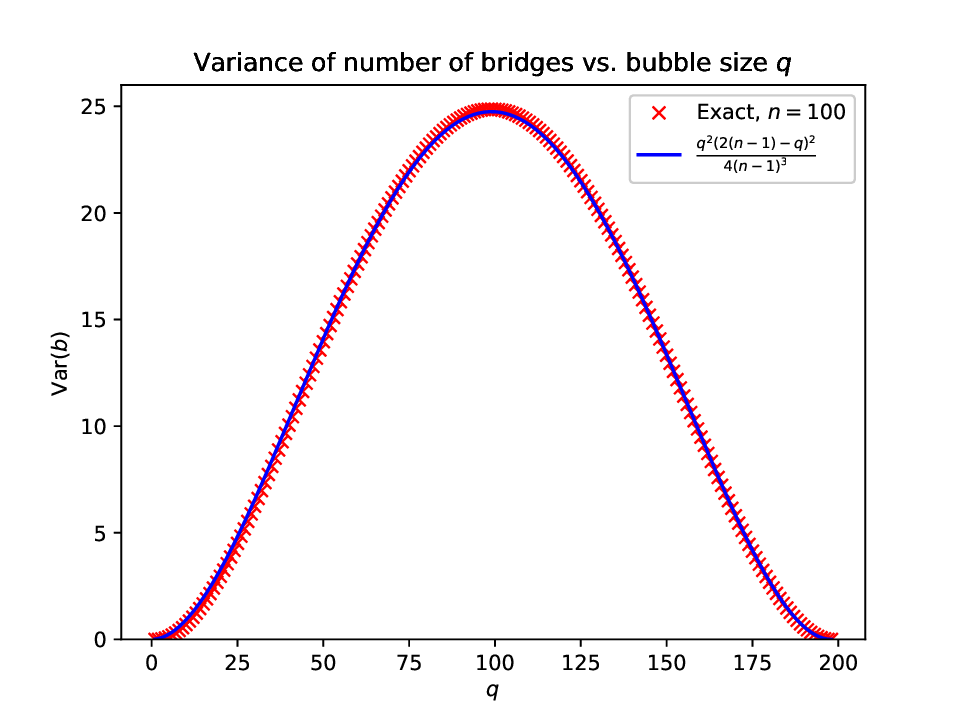}
\end{center}
\caption{On the left: the mean number $\bar b$ of bridges as a
  function of bubble size $q$, counted across all diagrams with
  $n=100$ chords. On the right: the variance of the number of bridges
  as a function of bubble size $q$, counted across all diagrams with
  $n=100$ chords. Exact data in red x's, asymptotic expressions from
  Equations (\ref{EqBbar}) and (\ref{EqVarb}), respectively, in solid
  blue.}
\label{FigMeanVar}
\end{figure}

\section{Bubbles in crystallized diagrams}

In this section we consider the enumeration of chord diagrams
consisting only of {\it empty} bubbles. An empty bubble is one devoid
of internal chords, or, equivalently, every vertex of an empty bubble
is attached to a bridge, see Figure \ref{FigCryst}. When all of a
diagram's bubbles are empty, we refer to the diagram as {\it
  crystallized}.

\begin{figure}[H]
\begin{center}
  \includegraphics[width=3.in]{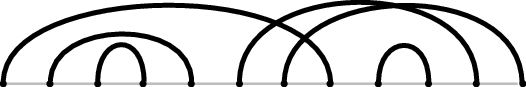}
\end{center}
\caption{An example of a crystallized diagram with $n=6$ chords. This
  diagram has $k=2$ short chords. Note that all three bubbles (of
  sizes $2$, $4$, and $2$ reading from left to right) are devoid of
  internal chords.}
\label{FigCryst}
\end{figure}

\begin{table}[H]
\begin{center}
  \begin{tabular}{c|lllllll}
  $n$ \textbackslash$k$& 1& 2& 3& 4& 5& 6& 7\\
    \hline
1& 1\\
2& 1& 1\\
3& 2& 3& 1\\
4& 6& 12& 6& 1\\
5& 24& 62& 39& 10& 1\\
6& 120& 396& 296& 95& 15& 1\\
7& 720& 3024& 2616& 980& 195& 21& 1\\
\end{tabular}
\end{center}
\caption{The total number, $R_{n,k}$, of crystallized linear chord
  diagrams on $n$ chords containing $k$ short chords. On-line
  Encyclopedia of Integer Sequences \seqnum{A375504}.}\label{TabRnk}
\end{table}

In Table \ref{TabRnk} we show the number, $R_{n,k}$, of crystallized
diagrams consisting of $n$ chords, exactly $k$ of which are short
chords. One can recognize various patterns in the table. In the first
column, we see that $R_{n,1}=(n-1)!$. Crystallized diagrams with a
single short chord consist of two bubbles, each of size $n-1$, with a
central short chord bounding them. Elementary considerations reveal
that there are then $(n-1)!$ distinct ways of joining the vertices in
the two bubbles by chords. Looking at the leading diagonal, it is also
clear that $R_{n,n}=1$; there is only one diagram in which all chords
are short. Finally, on the subleading diagonal we see that $R_{n,n-1}
= T_{n-1}$, where $T_{n-1}=n(n-1)/2$ is the $(n-1)^\text{th}$
triangular number. These diagrams have two vertices which do not
participate in short chords. The first such vertex may be placed in
any of the $n$ gaps between the $n-1$ short chords (including the gaps
formed by the ends of the diagram), whilst the second must be placed
in a different gap, hence giving $n(n-1)/2$ distinct configurations.

\begin{theorem}
  The number of distinct crystallized diagrams consisting of $n$
  chords, exactly $k$ of which are short chords, is given by
\begin{equation}\label{EqRnk}
R_{n,k} = \sum_{\substack{\{p_i\}\\\sum p_i = n-k}}
\frac{w_1!w_2!\cdots w_{k+1}!}{p_1!p_2!\cdots p_{k(k+1)/2}!},
\end{equation}
where $w_i$ is the $i^{\text{th}}$ component of the $(k+1)\times 1$ column
vector ${\bf w}$ given by
\begin{equation}\nonumber
  {\bf w} = {\bf B}\, {\bf p}, 
\end{equation}
where ${\bf B}$ is the incidence matrix of the complete graph on $k+1$
vertices and ${\bf p}$ is the $k(k+1)/2 \times 1$ column vector formed
from the $p_i$.
\end{theorem}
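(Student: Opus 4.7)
The plan is to parametrize crystallized diagrams by the slot structure carved out by their short chords and then count the bridge configurations, showing that the factors in (\ref{EqRnk}) arise naturally as a product of multinomials and matching counts.

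First I would observe that the $k$ short chords in a crystallized diagram of $n$ chords partition the $2n-2k$ remaining vertices into $k+1$ consecutive ``slots'' (possibly empty), bounded by short chords or by the ends of the diagram. Writing $w_1,\dots,w_{k+1}$ for the slot sizes, we have $\sum_i w_i=2n-2k$, and once the $w_i$ are fixed the positions of the short chords are determined, so no extra factor is needed to place them. Because the diagram is crystallized, every non-short chord is a bridge, i.e., it joins vertices in two distinct slots. Letting $p_{ij}$ denote the number of bridges between slots $i$ and $j$ for $1\le i<j\le k+1$, one has $\sum_{i<j}p_{ij}=n-k$ and $w_i=\sum_{j\neq i}p_{ij}$; this last relation is exactly $\mathbf{w}=\mathbf{B}\,\mathbf{p}$ with $\mathbf{B}$ the incidence matrix of $K_{k+1}$. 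Relabeling the $p_{ij}$ as $p_1,\dots,p_{k(k+1)/2}$ matches the theorem's indexing.

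For a fixed tuple $(p_{ij})$ I would count in two stages. For each slot $i$, assign to each of its $w_i$ ordered vertices a label $j\neq i$ indicating the destination slot of its chord, with exactly $p_{ij}$ vertices receiving label $j$; the number of such labelings is the multinomial $w_i!/\prod_{j\neq i}p_{ij}!$. Then, for each unordered pair $\{i,j\}$, bijectively match the $p_{ij}$ vertices in slot $i$ labeled $j$ with the $p_{ij}$ in slot $j$ labeled $i$, contributing $p_{ij}!$ ways. Taking the product over $i$ and over pairs, each factor $p_{ij}!$ appears twice in the denominator (once from slot $i$, once from slot $j$) and once in the numerator, so the expression collapses to $\prod_i w_i!\big/\prod_{i<j}p_{ij}!$, which is precisely the summand of (\ref{EqRnk}). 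Summing over all non-negative $(p_i)$ with $\sum p_i=n-k$ yields the theorem.

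The main obstacle is conceptual rather than computational: one must verify that this procedure is a bijection between crystallized diagrams and the data (slot sizes, vertex labelings, bijective matchings per pair). The constraint that labels satisfy $j\neq i$ is what guarantees no chord stays within a single slot, which would create an internal chord and violate the crystallized condition; empty slots ($w_i=0$) correspond to adjacent short chords and contribute a harmless factor of $1$. As a sanity check I would confirm the explicit values $R_{n,1}=(n-1)!$, $R_{n,n}=1$, and $R_{n,n-1}=n(n-1)/2$ noted above Table \ref{TabRnk}, as well as the entries $R_{3,2}=3$ and $R_{4,2}=12$ from the table, each of which follows immediately from the summation over the corresponding small set of $(p_{ij})$.
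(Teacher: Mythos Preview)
Your proposal is correct and follows essentially the same approach as the paper: both arguments identify the $k+1$ bubbles with the vertices of $K_{k+1}$, record the bridge counts $p_{ij}$ on the edges, read off $w_i$ via the incidence relation $\mathbf{w}=\mathbf{B}\,\mathbf{p}$, and then count placements of bridge-endpoints within the bubbles. The paper phrases the final count as ``treat the $n-k$ bridges as distinguishable, giving $\prod_i w_i!$, then divide by $\prod p_{ij}!$ for indistinguishability,'' whereas you unpack this into a two-stage label-then-match argument that makes the cancellation of the $p_{ij}!$ factors explicit; your version is slightly more detailed but not genuinely different.
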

\begin{proof}
The presence of $k$ short chords generically divides a crystallized
diagram into $k+1$ bubbles. A given bubble can therefore be connected,
through its bridges, to any of the $k$ others. It is natural,
therefore, to consider the complete graph on $k+1$ vertices, which we
will denote $K_{k+1}$. We let each vertex of $K_{k+1}$ represent a
bubble, and each edge represent some number $0\leq p_i\leq n-k$, where
$i\in [1,k(k+1)/2]$, of bridges joining the two bubbles in
question. The size of a given bubble is then determined by the sum of
its incident $p_i$: $w_i$ is a sum of $k$ of the
$p_i$'s and represents the size of $i^{\text{th}}$ bubble. It is also
clear that $\sum_i p_i = n-k$, as there are $n-k$ bridges in total.

Were the $n-k$ bridges distinguishable, we would have $w_1!w_2!\cdots
w_{k+1}!$ different ways of arranging the ends of the bridges within
the bubbles. Since they are, in fact, indistinguishable, we must
divide by $p_1!p_2!\cdots p_{k(k+1)/2}!$, yielding the desired result.
\end{proof}

\begin{corollary}
  The total number, $C_{n,k,q}$, of bubbles of size $q$ found amongst all
  crystallized diagrams on $n$ chords, which also contain exactly $k$
  short chords, is given by
\begin{equation}\label{EqCnkq}
C_{n,k,q}=(k+1) \frac{(2n-k-q-1)!}{(2n-k-2q-1)!} 
  \sum_{\substack{\{p_i\}\\\sum p_i = n-k-q}}
\frac{w_1!w_2!\cdots w_k!}{p_1!p_2!\cdots p_{k(k-1)/2}!}.
\end{equation}
\end{corollary}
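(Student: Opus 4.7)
The plan is to count pairs $(D,B)$, where $D$ is a crystallized diagram on $n$ chords with exactly $k$ short chords and $B$ is a bubble of $D$ of size $q$; summed over $D$, this pair count is precisely $C_{n,k,q}$. Inside the theorem's formula, the expression $\prod_i w_i!/\prod_j p_j!$ summed over $\{p_j\}$ subject to the degree constraints $w_i=\sum_{\ell\ne i}p_{i\ell}$ is a symmetric function of the vertex sizes $(w_0,\dots,w_k)$, because relabelling the vertices of $K_{k+1}$ merely permutes the summation variables. Consequently, the number of diagrams in which the bubble at a specified left-to-right position has size $q$ is the same for all $k+1$ possible choices of that position. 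I therefore fix the distinguished bubble to be the leftmost one, call it $B_0$, and include an overall factor of $(k+1)$.

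With $B_0$ fixed, let $q_j\ge 0$ ($j=1,\ldots,k$) be the number of bridges from $B_0$ to $B_j$ (so $\sum_j q_j=q$), and let $p_i\ge 0$, $i\in[1,k(k-1)/2]$, be the number of bridges on the $i$-th edge of the complete graph $K_k$ on $B_1,\ldots,B_k$ (so $\sum_i p_i=n-k-q$). Writing $w_j^{(0)}$ for the contribution to the size of $B_j$ coming from the edges of $K_k$, the full size of $B_j$ is $w_j=q_j+w_j^{(0)}$. Adapting the theorem's bookkeeping, treating the $n-k$ bridges as distinguishable gives $q!\prod_{j=1}^k w_j!$ endpoint arrangements, and dividing by $\prod_j q_j!\prod_i p_i!$ corrects for permutations of bridges lying on the same edge of $K_{k+1}$.

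The $q_j$ sum can now be performed in closed form. Using $(q_j+w_j^{(0)})!=q_j!\,w_j^{(0)}!\binom{q_j+w_j^{(0)}}{q_j}$ and the Vandermonde-type identity
\begin{equation}\nonumber
\sum_{q_1+\cdots+q_k=q}\prod_{j=1}^k\binom{q_j+w_j^{(0)}}{q_j}=\binom{q+k+\sum_j w_j^{(0)}-1}{q}=\binom{2n-k-q-1}{q},
\end{equation}
where $\sum_j w_j^{(0)}=2(n-k-q)$ has been used, the $q!$ from the bridges at $B_0$ combines with this binomial to produce exactly the prefactor $(2n-k-q-1)!/(2n-k-2q-1)!$. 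The remaining summation over $\{p_i\}$ with $\sum_i p_i=n-k-q$ takes the form $\prod_{j=1}^k w_j^{(0)}!/\prod_i p_i!$, which is precisely the sum appearing in equation~(\ref{EqCnkq}); multiplication by $(k+1)$ completes the derivation. The only step beyond routine bookkeeping is the Vandermonde-type evaluation of the $q_j$ sum, which is the mechanism that produces the ``available external-endpoint'' prefactor.
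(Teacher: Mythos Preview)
Your argument is correct. It differs from the paper's in method, though the two are closely related.

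The paper proves the corollary bijectively: given a pair $(D,B)$ it \emph{collapses} $B$ by deleting its $q$ vertices, one bounding short chord, and the $q$ external endpoints of its bridges, producing a crystallized diagram counted by $R_{n-q-1,k-1}=\Psi_{n-k-q,k-1}$; it then counts the reversals, obtaining the factor $(k+1)$ from the choice of where to reinsert the bubble and the factor $q!\binom{2n-q-k-1}{q}=(2n-k-q-1)!/(2n-k-2q-1)!$ from a balls-into-bins placement of the $q$ external endpoints among $2n-2q-k$ slots. Your approach instead stays entirely inside the exact formula $R_{n,k}=\Psi_{n-k,k}$: you use the symmetry of $\Psi$ in the bubble labels to extract the factor $(k+1)$, split the edge variables of $K_{k+1}$ into those incident to $B_0$ (the $q_j$) and those not (the $p_i$), and then evaluate the $q_j$-sum by the negative-binomial Vandermonde convolution. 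That convolution is the algebraic shadow of the paper's balls-into-bins count: the identity $\sum_{\sum q_j=q}\prod_j\binom{q_j+w_j^{(0)}}{q_j}=\binom{2n-k-q-1}{q}$ records exactly the number of ways to distribute the $q$ external endpoints among the $k$ remaining bubbles of sizes $w_j^{(0)}$, which is what the paper counts by stars-and-bars after collapsing. Your route avoids having to check that the collapse is invertible and connects the corollary more transparently to the theorem's formula; the paper's route gives a direct combinatorial picture of where the prefactor comes from.
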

\begin{proof}
In order to understand this formula, consider the following procedure
on a crystallized diagram of $n$ chords, with $k$ short chords, and at
least one bubble of size $q$. We take one of the bubbles of size $q$
and ``collapse'' it -- we delete the $q$ vertices of the bubble, the
two vertices comprising one of the short chords which bound the bubble
(in the case when the bubble is bounded by the end of the diagram,
there is only one such short chord), and the $q$ vertices found at the
other end of the deleted bubble's bridges. We are left with a
crystallized diagram of $n-q-1$ chords with $k-1$ short chords, and so
$n-q-k$ bridges, see Figure \ref{FigCnkq}.
\begin{figure}[H]
\begin{center}
  \includegraphics[width=4.in]{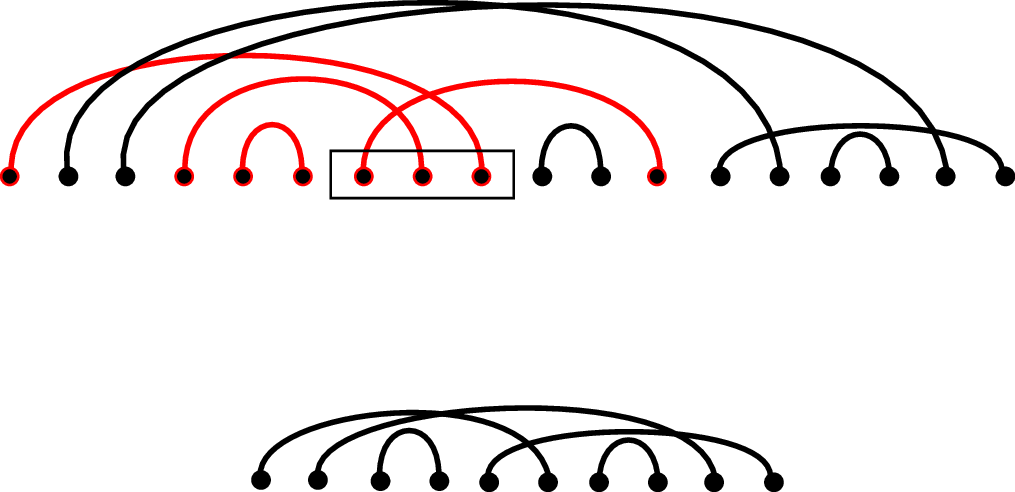}
\end{center}
\caption{Pictured in the top figure is a crystallized diagram with
  $n=9$ chords, $k=3$ short chords, and containing a bubble of size
  $q=3$ which has been indicated by a box. In the collapse procedure
  the elements marked in red are deleted, giving rise to the diagram
  in the bottom figure which has $n-q-1=5$ chords and $k-1=2$ short
  chords.}
\label{FigCnkq}
\end{figure}
\noindent There are $R_{n-q-1,k-1}$ of these diagrams, which
accounts for the summation factor in the expression. We now consider
all the ways in which the deleted bubble could be restored -- by
inserting its $q$ vertices and deleted adjacent short chord at either
end of the diagram or adjacent to any of the $k-1$ short chords --
therefore in any of $2+k-1=k+1$ positions, accounting for the leading
factor of $(k+1)$. The placement of the $q$ ends of the restored
bubble's bridges reduces to a balls-into-bins counting problem. The
$q$ vertices in question are the balls, while the bins are the
positions between the vertices of the $n-q-1$ chords of the collapsed
diagram, not allowing positions inside any of the $k-1$ short chords,
but including the two end positions. This amounts to $2(n-q-1-(k-1)) +
k - 1 +1 = 2n - 2q -k$ bins. As the $q$ balls are distinguishable,
there are $q! {(2n-2q-k)+q-1\choose q}$ placements, which furnishes
the remaining quotient of factorials appearing before the sum.
\end{proof}

\begin{remark}
Comparing Equations (\ref{EqRnk}) and (\ref{EqCnkq}), we note that
\begin{equation}\nonumber
  R_{n,k} = \frac{1}{k+2} C_{n+1,k+1,0},
\end{equation}
which can also be appreciated by noting that ``bubbles of size zero''
come either from adjacent short chords, or short chords at the two
ends of the diagram: given a diagram counted by $R_{n,k}$, one may
place a short chord adjacent to any of the $k$ existing short chords,
or at the two ends of the diagram, and thus at $k+2$ distinct
positions, thus producing bubbles of size zero in diagrams with one
extra (short) chord.
\end{remark}

\subsection{Asymptotic distribution}

We seek to obtain an asymptotic expression for $R_{n,k}$ (and, by
extension, for $C_{n,k,q}$) valid at large values of $n$, $k$, and
$q$. We will therefore concentrate on the sum 
\begin{equation}\nonumber
 \Psi_{N,k} =  \sum_{\substack{\{p_i\}\\\sum p_i = N}}
  \frac{w_1!w_2!\cdots w_{k+1}!}{p_1!p_2!\cdots p_{k(k+1)/2}!},
\end{equation}
as our quantities of interest are then given by
\begin{equation}\nonumber
  R_{n,k} = \Psi_{n-k,k}, \qquad
  C_{n,k,q} = (k+1) \frac{(2n-k-q-1)!}{(2n-k-2q-1)!}\, \Psi_{n-k-q,k-1}.
\end{equation}
At large values of $N$, the sum over the $p_i$ is dominated
by the locus $p_1 = p_2 = \cdots = p_{k(k+1)/2} = \bar n$ where $$\bar
n = \frac{2N}{k(k+1)}.$$ We expand each of the $p_i$ about this
locus as follows:
\begin{equation}\nonumber
  p_i = \bar n + \sqrt{\bar n}\,z_i,\qquad w_i = k\bar
  n+\sqrt{k\bar n}\,u_i, \qquad \sum u_i =  \sum z_i = 0,
\end{equation}
where $\sqrt{k}\,{\bf u} = {\bf B}\,{\bf z}$, and where $u_i$
(respectively $z_i$) is the $i^{\text{th}}$ component of the vector
${\bf u}$ (respectively ${\bf z}$). Using the Stirling approximation,
we find that the summand of $\Psi_{N,k}$, which we will denote
$S\Psi_{N,k}$, may be approximated as follows
\begin{equation}\nonumber
S\Psi_{N,k} =\frac{w_1!w_2!\cdots w_{k+1}!}{p_1!p_2!\cdots p_{k(k+1)/2}!}\simeq
\frac{\prod_{j=1}^{k+1}\left(\frac{k\bar
    n}{e}\left(1+\frac{u_j}{\sqrt{k\bar n}}\right)\right)^{k\bar n
    +\sqrt{k\bar n}u_j} \sqrt{2\pi k\bar n\left(1+\frac{u_j}{\sqrt{k\bar n}}\right)}}
     {\prod_{i=1}^E\left(\frac{\bar n}{e}\left(1+\frac{z_i}{\sqrt{\bar
         n}}\right)\right)^{\bar n +\sqrt{\bar n}z_i} \sqrt{2\pi\bar n \left(1+
         \frac{z_i}{\sqrt{\bar n}}\right)}},
\end{equation}
where $E=k(k+1)/2$. Simplifying, we find 
\begin{equation}\nonumber
S\Psi_{N,k} \simeq(2\pi\bar n)^{(k+1-E)/2}k ^{(k+1)/2}\frac{ \left(\frac{k \bar
    n}{e}\right)^{k(k+1) \bar n+\sqrt{k \bar n}\sum_j
    u_j}}{\left(\frac{\bar n}{e}\right)^{\bar n E +\sqrt{\bar n}\sum_i
    z_i}} \frac{\prod_{j=1}^{k+1}\left(1+\frac{u_j}{\sqrt{k \bar
    n}}\right)^{k \bar n +\sqrt{k \bar n}u_j+1/2} }
{\prod_{i=1}^E\left(1+\frac{z_i}{\sqrt{\bar n}}\right)^{\bar n +\sqrt{\bar
      n}z_i+1/2} }.
\end{equation}
Using the constraints $\sum_j u_j = 0 = \sum_i z_i$, we find
\begin{equation}\nonumber
  \begin{split}
S\Psi_{N,k} \simeq{\cal C}_0 &\left(1+\frac{1}{\sqrt{k \bar n}} \sum_ju_j
+\frac{1}{k \bar
  n}\sum_{j_1<j_2}u_{j_1}u_{j_2}+\cdots\right)^{k \bar n
+1/2}\prod_{j=1}^{k+1}\left(1+\frac{u_j}{\sqrt{k \bar
    n}}\right)^{\sqrt{k \bar n}u_j}\\
\times&\left(1+\frac{1}{\sqrt{\bar n}}\sum_i z_i +\frac{1}{\bar
  n}\sum_{i_1<i_2} z_{i_1}z_{i_2}+\cdots\right)^{-\bar n-1/2}
\prod_{i=1}^E\left(1+\frac{z_i}{\sqrt{\bar n}}\right)^{-\sqrt{\bar
      n}z_i},
\end{split}
\end{equation}
where we have defined the constant:
\begin{equation}\nonumber
{\cal C}_0=(2\pi\bar n)^{(k+1-E)/2}k ^{(k+1)/2}\left(\frac{k \bar
    n}{e}\right)^{k(k+1) \bar n}\left(\frac{\bar n}{e}\right)^{-\bar n
  E}.
\end{equation}
We now use the constraints $\sum_j u_j = 0 = \sum_i z_i$ to write
\begin{equation}\nonumber
  \sum_{i_1<i_2} z_{i_1}z_{i_2} = -\frac{1}{2} \sum_i z_i^2,\qquad
  \sum_{j_1<j_2} u_{j_1}u_{j_2} = -\frac{1}{2} \sum_j u_j^2.
\end{equation}
This allows us to write
\begin{equation}\nonumber
  \begin{split}
&S\Psi_{N,k} \simeq {\cal C}_0 \left(1
-\frac{1}{2k \bar
  n}\sum_{j}u_{j}^2\right)^{k \bar n+1/2
}\exp\left(\sum_j u^2_j\right)
\left(1 -\frac{1}{2\bar
  n}\sum_{i} z_{i}^2\right)^{-\bar n-1/2}
\exp\left(-\sum_i z^2_i\right)
\end{split}
\end{equation}
\begin{equation}\nonumber
  \begin{split}
&\simeq {\cal C}_0
\exp\left(-\sum_i u^2_j/2\right)
\exp\left(\sum_j u^2_j\right)
\exp\left(\sum_i z^2_i/2\right)
\exp\left(-\sum_i z^2_i\right)
\end{split}
\end{equation}
\begin{equation}\nonumber
={\cal C}_0\exp\left(
\frac{1}{2}\sum_{j}u_{j}^2
-\frac{1}{2}\sum_{i} z_{i}^2\right)
\end{equation}
\begin{equation}\nonumber
= {\cal C}_0\exp\left( - \frac{1}{2} \bigl({\bf
  z}^{\mathsf{T}}{\bf z}- {\bf u}^{\mathsf{T}}{\bf u}\bigr)\right)
=  {\cal C}_0\exp\left( - \frac{1}{2} \left({\bf z}^{\mathsf{T}}{\bf z}
-\frac{1}{k} {\bf
  z}^{\mathsf{T}}{\bf B}^{\mathsf{T}}{\bf B}{\bf z}\right)\right)
\end{equation}
We now use the fact that ${\bf B}^\mathsf{T} {\bf B} = 2{\bf I}_E +
{\bf L}$, where ${\bf L}$ is the adjacency matrix of the line graph of
$K_{k+1}$, to obtain
\begin{equation}\nonumber
S\Psi_{N,k} \simeq {\cal C}_0\exp\left(-\frac{1}{2k } {\bf z}^{\mathsf{T}}
\bigl( (k-2){\bf I}_E- {\bf L}\bigr){\bf z}\right).
\end{equation}

We now approximate the sum over the $p_i$ by integrating over the
$z_i$. We first scale the $z_i$ by $1/\sqrt{\bar n}$, so that factors of
$\sqrt{\bar n}$ do not appear in the integration measure. We then
impose the constraint $\sum z_i = 0$ through the introduction of the
Dirac delta function
\begin{equation}\nonumber
  \delta\left(\sum z_i\right) = \frac{1}{2\pi}\int_{-\infty}^\infty d\omega
  \,e^{i\omega\sum z_i}= \frac{1}{2\pi}\int_{-\infty}^\infty d\omega
  \,e^{i\omega{\bf j}_E^\mathsf{T} {\bf z}},
\end{equation}
where ${\bf j}_E$ is a column vector consisting of $k(k+1) /2$
ones. We obtain
\begin{align}\nonumber
  \Psi_{N,k}&\simeq \,{\cal C}_0 \int dz_1 \cdots \int dz_{k(k+1)
    /2}\int\frac{d\omega}{2\pi} \exp\left(-\frac{1}{2\bar nk } {\bf
    z}^{\mathsf{T}} \, {\bf M}\,{\bf z}+ i\omega{\bf j}_E^\mathsf{T}
           {\bf z}\right)\\\label{eqBG} &=  {\cal C}_0\,
           \frac{\left(2\pi k \bar n\right)^{k(k+1)/4-1/2}}
                {\sqrt{{\bf j}_E^\mathsf{T} {\bf M}^{-1}{\bf j}_E\det
                    {\bf M}}},
\end{align}
where ${\bf M} = (k-2){\bf I}_E- {\bf L}$. In order to evaluate the
expression in the denominator, we first establish some basic results
associated with the spectrum of the matrix ${\bf M}$.

\begin{lemma}\label{specA}
The adjacency matrix ${\bf A}$ of $K_{k+1}$ has one eigenvalue equal to $k$
and $k$ others all equal to $-1$.
\end{lemma}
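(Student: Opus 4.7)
The plan is to exploit the simple structure of the adjacency matrix of the complete graph. Since every off-diagonal entry of $\mathbf{A}$ equals $1$ and every diagonal entry equals $0$, we can write $\mathbf{A} = \mathbf{J}_{k+1} - \mathbf{I}_{k+1}$, where $\mathbf{J}_{k+1}$ denotes the $(k+1)\times(k+1)$ all-ones matrix. Any eigenvalue/eigenvector pair of $\mathbf{J}_{k+1}$ immediately yields one for $\mathbf{A}$ with the eigenvalue shifted down by one.

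Next I would determine the spectrum of $\mathbf{J}_{k+1}$ directly. The all-ones column vector $\mathbf{j}_{k+1}$ satisfies $\mathbf{J}_{k+1}\mathbf{j}_{k+1} = (k+1)\mathbf{j}_{k+1}$, giving the eigenvalue $k+1$. On the other hand, $\mathbf{J}_{k+1}$ has rank one, so its kernel has dimension $k$; explicitly, any vector whose components sum to zero lies in the kernel, and these form a $k$-dimensional subspace orthogonal to $\mathbf{j}_{k+1}$. Hence $\mathbf{J}_{k+1}$ has eigenvalues $k+1$ (simple) and $0$ (with multiplicity $k$).

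Subtracting the identity shifts every eigenvalue by $-1$, so $\mathbf{A}$ has eigenvalues $k$ (simple, with eigenvector $\mathbf{j}_{k+1}$) and $-1$ (with multiplicity $k$, on the subspace of vectors summing to zero), which is exactly the claim. The argument is entirely routine and offers no real obstacle; it is included only to set up the spectral computations of $\mathbf{M} = (k-2)\mathbf{I}_E - \mathbf{L}$ and of the line-graph adjacency matrix $\mathbf{L}$ that appear in the denominator of Equation~(\ref{eqBG}).
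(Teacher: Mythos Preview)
Your proof is correct and follows essentially the same approach as the paper: both write $\mathbf{A}$ as the all-ones matrix (the paper writes it as $\mathbf{j}_{k+1}\mathbf{j}_{k+1}^{\mathsf{T}}$) minus the identity, identify the rank-one structure to read off the eigenvalues $k+1$ and $0$, and then shift by $-1$. The only difference is cosmetic notation.
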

\begin{proof}
We begin by noting that ${\bf A}$ may be written as ${\bf A}= {\bf
  j}_{k+1} {\bf j}_{k+1}^\mathsf{T}-{\bf I}_{k+1}$, where ${\bf
  j}_{k+1}$ is a $(k+1)\times 1$ column vector of $1$'s. The vector
${\bf j}_{k+1}$ is an eigenvector of the matrix ${\bf j}_{k+1} {\bf
  j}_{k+1}^\mathsf{T}$ since ${\bf j}_{k+1} {\bf j}_{k+1}^\mathsf{T}
{\bf j}_{k+1} = (k+1){\bf j}_{k+1} $. This also tells us that $k+1$ is
one eigenvalue of ${\bf j}_{k+1} {\bf j}_{k+1}^\mathsf{T}$, but since this
matrix is formed of $k+1$ copies of the same column, it has rank $1$ and
all of its $k$ other eigenvalues are therefore zero. The eigenvalues
of ${\bf A}$ are those of ${\bf j}_{k+1} {\bf j}_{k+1}^\mathsf{T}$ shifted by
$-1$, and so the result follows.
\end{proof}
\begin{lemma}\label{BBT}
The matrix ${\bf B}{\bf B}^\mathsf{T}$, where ${\bf B}$ is the
incidence matrix of $K_{k+1}$, has one eigenvalue equal to $2k $ and
$k$ others all equal to $k-1$.
\end{lemma}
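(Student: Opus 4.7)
The plan is to reduce the claim to Lemma \ref{specA} via the standard incidence--adjacency identity. First I would establish the entry-wise identity
\[
\mathbf{B}\mathbf{B}^\mathsf{T} = \mathbf{D} + \mathbf{A},
\]
valid for the (unsigned) incidence matrix of any simple graph, where $\mathbf{D}$ is the diagonal matrix of vertex degrees and $\mathbf{A}$ is the adjacency matrix. This comes straight from expanding the inner product of rows $i$ and $j$ of $\mathbf{B}$: the edges simultaneously incident to vertex $i$ and vertex $j$ either have $i=j$ (contributing $\deg(i)$ on the diagonal) or correspond to the single edge $ij$ when it is present (contributing $1$ off the diagonal, else $0$).

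Since $K_{k+1}$ is $k$-regular, this specialises to $\mathbf{B}\mathbf{B}^\mathsf{T} = k\mathbf{I}_{k+1} + \mathbf{A}$. Consequently the spectrum of $\mathbf{B}\mathbf{B}^\mathsf{T}$ is obtained from that of $\mathbf{A}$ by a uniform shift of $+k$, with the same eigenvectors and same multiplicities. Invoking Lemma \ref{specA}, which supplies eigenvalues $k$ (with multiplicity one) and $-1$ (with multiplicity $k$) for $\mathbf{A}$, yields immediately the eigenvalues $2k$ (once) and $k-1$ ($k$ times) for $\mathbf{B}\mathbf{B}^\mathsf{T}$, as claimed.

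There is essentially no obstacle: the lemma is a one-line corollary of Lemma \ref{specA} once the incidence--adjacency identity is in hand. The only point to watch is the sign convention; the identity $\mathbf{B}\mathbf{B}^\mathsf{T} = \mathbf{D} + \mathbf{A}$ uses the unsigned incidence matrix, which is exactly the convention already in force in the paper (as can be cross-checked against the relation $\mathbf{w} = \mathbf{B}\,\mathbf{p}$ for bubble sizes, and against the dual identity $\mathbf{B}^\mathsf{T}\mathbf{B} = 2\mathbf{I}_E + \mathbf{L}$ used immediately before the lemma).
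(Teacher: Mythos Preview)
Your proposal is correct and follows essentially the same approach as the paper: both invoke the identity $\mathbf{B}\mathbf{B}^\mathsf{T} = k\mathbf{I}_{k+1} + \mathbf{A}$ for $K_{k+1}$ and then apply Lemma~\ref{specA} to shift the spectrum by $+k$. Your version is slightly more explicit in deriving the incidence--adjacency identity and in flagging the unsigned convention, but the core argument is identical.
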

\begin{proof}
We use the result ${\bf B}{\bf B}^\mathsf{T} = k {\bf I}_{k+1} + {\bf
  A}$. This implies that the eigenvalues of ${\bf B}{\bf
  B}^\mathsf{T}$ are those of ${\bf A}$, with each eigenvalue
increased by $k$. Using the results of Lemma \ref{specA}, we obtain
the desired result.
\end{proof}
\begin{lemma}\label{BTB}
The adjacency matrix ${\bf L}$ of the line graph of $K_{k+1}$ has one
eigenvalue equal to $2(k-1)$, $k$ others all equal to $k-3$, and a
final set of $(k+1)(k-2)/2$ eigenvalues all equal to -2.
\end{lemma}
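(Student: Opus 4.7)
The plan is to read off the spectrum of $\mathbf{L}$ directly from the identity $\mathbf{B}^{\mathsf{T}}\mathbf{B} = 2\mathbf{I}_E + \mathbf{L}$ already invoked in the derivation leading up to Equation~(\ref{eqBG}), which rewrites the question as one about the eigenvalues of the Gram matrix $\mathbf{B}^{\mathsf{T}}\mathbf{B}$: each must simply be decreased by $2$ to yield an eigenvalue of $\mathbf{L}$.

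To determine the spectrum of $\mathbf{B}^{\mathsf{T}}\mathbf{B}$ I would invoke the standard linear-algebra fact that, for any rectangular matrix $\mathbf{B}$, the two Gram products $\mathbf{B}^{\mathsf{T}}\mathbf{B}$ and $\mathbf{B}\mathbf{B}^{\mathsf{T}}$ share their nonzero eigenvalues with identical multiplicities. One standard way to see this is via the singular value decomposition, but it also follows from the observation that if $\mathbf{B}^{\mathsf{T}}\mathbf{B}\mathbf{v} = \lambda \mathbf{v}$ with $\lambda \neq 0$, then $\mathbf{B}\mathbf{v}$ is a nonzero $\lambda$-eigenvector of $\mathbf{B}\mathbf{B}^{\mathsf{T}}$, and the map $\mathbf{v} \mapsto \mathbf{B}\mathbf{v}$ is injective on each nonzero eigenspace.

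Lemma~\ref{BBT} provides the spectrum of $\mathbf{B}\mathbf{B}^{\mathsf{T}}$ as $2k$ (once) and $k-1$ ($k$ times), both of which are nonzero for $k \geq 2$. Hence $\mathbf{B}^{\mathsf{T}}\mathbf{B}$, which is $E \times E$ with $E = k(k+1)/2$, carries these same two nonzero eigenvalues with the same multiplicities, and its remaining $E - (k+1) = (k+1)(k-2)/2$ eigenvalues are forced to be zero. Subtracting $2$ from each then yields, for $\mathbf{L}$, the values $2(k-1)$ with multiplicity $1$, $k-3$ with multiplicity $k$, and $-2$ with multiplicity $(k+1)(k-2)/2$, matching the statement.

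There is no genuine obstacle here. As a consistency check, the asserted multiplicities sum to $1 + k + (k+1)(k-2)/2 = k(k+1)/2 = E$. The only point one might worry about is the bookkeeping of the zero eigenvalues of $\mathbf{B}^{\mathsf{T}}\mathbf{B}$, but this is pinned down by the rank identity $\operatorname{rank}(\mathbf{B}^{\mathsf{T}}\mathbf{B}) = \operatorname{rank}(\mathbf{B}\mathbf{B}^{\mathsf{T}}) = k+1$, which is already delivered by Lemma~\ref{BBT}.
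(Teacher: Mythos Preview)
Your proposal is correct and follows essentially the same route as the paper: reduce to the spectrum of $\mathbf{B}^{\mathsf{T}}\mathbf{B}$ via $\mathbf{L}=\mathbf{B}^{\mathsf{T}}\mathbf{B}-2\mathbf{I}_E$, import the nonzero eigenvalues from $\mathbf{B}\mathbf{B}^{\mathsf{T}}$ (Lemma~\ref{BBT}), and fill in the remaining $E-(k+1)$ zero eigenvalues by a dimension/rank count. The only difference is cosmetic: where you invoke the standard fact that $\mathbf{B}^{\mathsf{T}}\mathbf{B}$ and $\mathbf{B}\mathbf{B}^{\mathsf{T}}$ share nonzero spectrum (via the eigenvector map $\mathbf{v}\mapsto\mathbf{B}\mathbf{v}$ or SVD), the paper derives the equivalent characteristic-polynomial identity $\det(\lambda\mathbf{I}_E-\mathbf{B}^{\mathsf{T}}\mathbf{B})=\lambda^{E-k-1}\det(\lambda\mathbf{I}_{k+1}-\mathbf{B}\mathbf{B}^{\mathsf{T}})$ by a block-matrix determinant computation.
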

\begin{proof}
We use the result ${\bf B}^\mathsf{T} {\bf B}= 2{\bf I}_{E} + {\bf
  L}$, where $E=k(k+1) /2$ is the number of edges in $K_{k+1}$. The
eigenvalues of ${\bf B}^\mathsf{T} {\bf B}$ are intimately connected
to those of ${\bf B}{\bf B}^\mathsf{T}$. To see this we construct two
block matrices
\begin{equation}\nonumber
  S=\begin{pmatrix}
    &\lambda{\bf I}_{E} &\sqrt{\lambda}{\bf B}^\mathsf{T} \\
    &\sqrt{\lambda}{\bf B} &\lambda{\bf I}_{k+1}
  \end{pmatrix},\quad
  T=\begin{pmatrix}
    &\lambda{\bf I}_{k+1} &\sqrt{\lambda}{\bf B} \\
    &\sqrt{\lambda}{\bf B}^\mathsf{T} &\lambda{\bf I}_{E}.
  \end{pmatrix}
\end{equation}
We note that there is a permutation $p\in S_{k+E}$ which, when applied
to both the rows and to the columns of $S$, gives $T$. Therefore $\det
S = \det T$, or, using the standard result for the determinant of
block matrices,
\begin{equation}\nonumber
  \det(\lambda {\bf I}_{k+1})\det\left(\lambda {\bf I}_E
  - \sqrt{\lambda}{\bf B}^\mathsf{T}
  \left(\lambda {\bf I}_{k+1}\right)^{-1}
  \sqrt{\lambda}{\bf B}\right)=
  \det(\lambda {\bf I}_E)\det\left(\lambda {\bf I}_{k+1}
  - \sqrt{\lambda}{\bf B}
  \left(\lambda {\bf I}_E\right)^{-1}
  \sqrt{\lambda}{\bf B}^\mathsf{T}\right).
\end{equation}
We therefore find that
\begin{equation}\nonumber
  \lambda^{k+1}\det\left(\lambda {\bf I}_E
  - {\bf B}^\mathsf{T}
 {\bf B}\right)=
  \lambda^E\det\left(\lambda {\bf I}_{k+1}
  - {\bf B}
 {\bf B}^\mathsf{T}\right),
\end{equation}
and so
\begin{equation}\nonumber
  \det\left(\lambda {\bf I}_E
  - {\bf B}^\mathsf{T}
 {\bf B}\right)=
  \lambda^{E-k-1}\det\left(\lambda {\bf I}_{k+1}
  - {\bf B}
  {\bf B}^\mathsf{T}\right).
\end{equation}
The conclusion is that the spectrum of ${\bf B}^\mathsf{T}{\bf B}$ is
that of ${\bf B}{\bf B}^\mathsf{T}$, with the addition of $E-k-1$ zero
eigenvalues. Since ${\bf L}={\bf B}^\mathsf{T} {\bf B}-2{\bf I}_{E}$,
the eigenvalues of ${\bf L}$ are those of ${\bf B}^\mathsf{T} {\bf B}$
minus 2, and in light of the results of Lemma \ref{BBT}, we obtain the
desired result.
\end{proof}
\begin{lemma}\label{lemgs}
The grand sum of the inverse, ${\bf j}_E^\mathsf{T} {\bf M}^{-1}{\bf
  j}_E$, is equal to $-(k+1)/2$.
\end{lemma}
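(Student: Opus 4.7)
The plan is to observe that ${\bf j}_E$ is itself an eigenvector of ${\bf M}$, so that the grand sum ${\bf j}_E^\mathsf{T}{\bf M}^{-1}{\bf j}_E$ collapses to a one-line scalar evaluation rather than requiring any explicit inversion. The first step is to use the identity ${\bf L}={\bf B}^\mathsf{T}{\bf B}-2{\bf I}_E$, already invoked in the proof of Lemma \ref{BTB}, to rewrite
\begin{equation}\nonumber
{\bf M}=(k-2){\bf I}_E-{\bf L}=k{\bf I}_E-{\bf B}^\mathsf{T}{\bf B},
\end{equation}
which exposes the action of ${\bf M}$ on any uniform vector via the incidence structure of $K_{k+1}$ alone.

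Next I would evaluate ${\bf B}{\bf j}_E$ and ${\bf B}^\mathsf{T}{\bf j}_{k+1}$ directly from the shape of the incidence matrix: every row of ${\bf B}$ contains $k$ ones (each vertex of $K_{k+1}$ has degree $k$), and every column of ${\bf B}$ contains two ones (each edge has two endpoints). Hence ${\bf B}{\bf j}_E=k\,{\bf j}_{k+1}$ and ${\bf B}^\mathsf{T}{\bf j}_{k+1}=2\,{\bf j}_E$, so composing gives ${\bf B}^\mathsf{T}{\bf B}\,{\bf j}_E=2k\,{\bf j}_E$ and therefore ${\bf M}\,{\bf j}_E=(k-2k){\bf j}_E=-k\,{\bf j}_E$. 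This identifies ${\bf j}_E$ as the eigenvector corresponding to the distinguished eigenvalue $-k$ of ${\bf M}$, i.e., the one inherited from the Perron eigenvalue $2(k-1)$ of ${\bf L}$ found in Lemma \ref{BTB}.

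From ${\bf M}\,{\bf j}_E=-k\,{\bf j}_E$ it follows that ${\bf M}^{-1}{\bf j}_E=-{\bf j}_E/k$, and hence
\begin{equation}\nonumber
{\bf j}_E^\mathsf{T}{\bf M}^{-1}{\bf j}_E=-\frac{{\bf j}_E^\mathsf{T}{\bf j}_E}{k}=-\frac{E}{k}=-\frac{k(k+1)/2}{k}=-\frac{k+1}{2},
\end{equation}
as claimed. There is no real obstacle: the only substantive check is that ${\bf M}$ is genuinely invertible, which is immediate from Lemma \ref{BTB}, since the spectrum of ${\bf M}$ consists of $-k$ (once), $1$ (with multiplicity $k$), and $k$ (with multiplicity $(k+1)(k-2)/2$), none of which vanishes. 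Everything else is a direct combinatorial reading of the incidence matrix of $K_{k+1}$.
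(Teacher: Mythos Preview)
Your proof is correct and follows essentially the same approach as the paper: both arguments show that ${\bf j}_E$ is an eigenvector of ${\bf M}$ with eigenvalue $-k$, then invert and take the inner product. The only cosmetic difference is that the paper computes ${\bf L}\,{\bf j}_E = 2(k-1)\,{\bf j}_E$ directly by counting adjacent edges in the line graph, whereas you reach the same conclusion via the factorization ${\bf M}=k{\bf I}_E-{\bf B}^\mathsf{T}{\bf B}$ and the row/column sums of ${\bf B}$.
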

\begin{proof}
We note that ${\bf j}_E$ is an eigenvector of ${\bf L}$, with
eigenvalue equal to the number of edges any given edge in $K_{k+1}$ is
adjacent to, which is $2(k-1)$. Therefore, ${\bf M} \,{\bf j}_E =
((k-2)-2(k-1)) {\bf j}_E = -k {\bf j}_E$. We therefore have that
${\bf M}^{-1}{\bf j}_E =-{\bf j}_E/k  $, and so ${\bf
  j}_E^\mathsf{T}{\bf M}^{-1}{\bf j}_E =-E/k  =-(k+1)/2$.
\end{proof}
\begin{theorem}\label{thmeig}
The matrix ${\bf M}= (k-2){\bf I}_E- {\bf L}$ has eigenvalues $-k $,
$1$, and $k$, with multiplicities $1$, $k$, and $(k+1)(k-2)/2$
respectively.
\end{theorem}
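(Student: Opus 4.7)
The plan is to deduce Theorem \ref{thmeig} directly from Lemma \ref{BTB} by the spectral mapping theorem for polynomials in a single matrix. Since ${\bf M}$ is an affine function of ${\bf L}$, namely ${\bf M} = (k-2){\bf I}_E - {\bf L}$, any eigenvector ${\bf v}$ of ${\bf L}$ with eigenvalue $\mu$ is also an eigenvector of ${\bf M}$ with eigenvalue $(k-2) - \mu$. The eigenspaces (and hence the multiplicities) of ${\bf M}$ agree with those of ${\bf L}$.

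I would therefore read off the three eigenvalues of ${\bf L}$ from Lemma \ref{BTB} and apply the map $\mu \mapsto (k-2) - \mu$ to each. This sends $2(k-1) \mapsto -k$, $k-3 \mapsto 1$, and $-2 \mapsto k$, preserving the multiplicities $1$, $k$, and $(k+1)(k-2)/2$ respectively. As a sanity check, the multiplicities sum to $1 + k + (k+1)(k-2)/2 = k(k+1)/2 = E$, matching the dimension of ${\bf M}$, and one can independently verify the eigenvalue $-k$ by noting (as in Lemma \ref{lemgs}) that ${\bf M}\,{\bf j}_E = -k\,{\bf j}_E$. No obstacle arises; the work has already been done in establishing the spectrum of the line-graph adjacency matrix ${\bf L}$ via the block-matrix argument of Lemma \ref{BTB}.
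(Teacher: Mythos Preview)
Your proposal is correct and follows essentially the same approach as the paper: both simply observe that ${\bf M}=(k-2){\bf I}_E-{\bf L}$ implies the eigenvalues of ${\bf M}$ are $(k-2)-\mu$ for each eigenvalue $\mu$ of ${\bf L}$, and then invoke Lemma~\ref{BTB} to read off the values and multiplicities. Your added sanity checks (that the multiplicities sum to $E$ and that ${\bf j}_E$ witnesses the eigenvalue $-k$) are nice but not needed.
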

\begin{proof}
We note that the eigenvalues of ${\bf M}$ are minus those of ${\bf L}$
plus $k-2$. Using the results of Lemma \ref{BTB}, we obtain the desired result.
\end{proof}
\begin{corollary}\label{CorBG}
\begin{equation}\nonumber
  {\bf j}_E^\mathsf{T}
  {\bf M}^{-1}{\bf j}_E\det {\bf M} = \frac{(k+1)k^{k(k-1)/2}}{2}.
\end{equation}
\end{corollary}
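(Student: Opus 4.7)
The plan is to obtain the identity by simply combining the two preceding results, Lemma \ref{lemgs} and Theorem \ref{thmeig}. The grand sum factor is already known outright from Lemma \ref{lemgs}, which gives ${\bf j}_E^\mathsf{T} {\bf M}^{-1}{\bf j}_E = -(k+1)/2$. The only remaining work is to evaluate $\det {\bf M}$ using the spectrum supplied by Theorem \ref{thmeig}.

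Concretely, I would proceed as follows. First, since ${\bf M}$ is symmetric its determinant is the product of its eigenvalues, counted with multiplicity. Using the multiplicities $(1, k, (k+1)(k-2)/2)$ attached to the eigenvalues $(-k, 1, k)$, I would write
\begin{equation}\nonumber
\det {\bf M} = (-k)^{1} \cdot 1^{k} \cdot k^{(k+1)(k-2)/2} = -k^{1 + (k+1)(k-2)/2}.
\end{equation}
Then I would simplify the exponent by noting that
\begin{equation}\nonumber
1 + \frac{(k+1)(k-2)}{2} = \frac{2 + k^{2} - k - 2}{2} = \frac{k(k-1)}{2},
\end{equation}
so that $\det {\bf M} = -k^{k(k-1)/2}$. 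As a consistency check I would verify that the multiplicities sum to the dimension of ${\bf M}$, namely $1 + k + (k+1)(k-2)/2 = k(k+1)/2 = E$, which they do.

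Multiplying the two factors now yields
\begin{equation}\nonumber
{\bf j}_E^\mathsf{T} {\bf M}^{-1}{\bf j}_E \det {\bf M} = \left(-\frac{k+1}{2}\right)\bigl(-k^{k(k-1)/2}\bigr) = \frac{(k+1)\,k^{k(k-1)/2}}{2},
\end{equation}
which is the claimed identity. There is no substantive obstacle here; the work was already done in establishing the eigenvalues and the eigenvalue of ${\bf j}_E$ under ${\bf L}$. The only point requiring minor care is the bookkeeping of the sign (the lone negative eigenvalue $-k$ contributes a minus sign to $\det {\bf M}$, which cancels the minus sign in the grand-sum formula) and the arithmetic simplification of the exponent.
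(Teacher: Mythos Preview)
Your proof is correct and follows exactly the route the paper indicates: it simply combines Lemma~\ref{lemgs} with the spectrum from Theorem~\ref{thmeig}, and your bookkeeping of the sign and the exponent $1+(k+1)(k-2)/2=k(k-1)/2$ is accurate. The paper's own proof is just the one-line remark that the result follows directly from those two statements, so you have merely written out the details it leaves implicit.
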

\begin{proof}
  This follows directly from Lemma \ref{lemgs} and Theorem \ref{thmeig}.
\end{proof}

With this result in hand, we may return to evaluating $R_{n,k}$ in the
limit of large diagrams.
\begin{theorem}\label{ThmRasym}
In the $n\to \infty$ limit, the number, $R_{n,k}$, of crystallized
diagrams with $k$ short chords, is given by
\begin{align}\nonumber
  R_{n,k}\simeq (k+1)^{-1/2}\,2^{n+1/2}\, e^{k-n}\,
  k^{n-k/2} \,\pi^{k/2}\,
  \left(\frac{n-k}{k+1}\right)^{n-k/2}.
\end{align}
\end{theorem}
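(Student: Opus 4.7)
The plan is to substitute $N = n-k$ directly into the asymptotic expression \eqref{eqBG} for $\Psi_{N,k}$ that was derived above, using the identity $R_{n,k} = \Psi_{n-k,k}$, and then to apply Corollary \ref{CorBG} for the denominator. This reduces the theorem to a pure bookkeeping exercise in collecting exponents of $k$, $\bar n$, $e$, $2$, and $\pi$.

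First I would expand $\mathcal{C}_0$ by noting that the identities $k(k+1)\bar n = 2N$ and $E\bar n = N$ (which follow directly from $\bar n = 2N/(k(k+1))$ and $E = k(k+1)/2$) immediately collapse the awkward $(k\bar n/e)^{k(k+1)\bar n}$ and $(\bar n/e)^{-E\bar n}$ factors to $k^{2N}\bar n^N e^{-N}$. Combining with the prefactor $(2\pi k\bar n)^{(E-1)/2}$ from \eqref{eqBG} and with the denominator $\sqrt{(k+1)k^{k(k-1)/2}/2}$ coming from Corollary \ref{CorBG}, the overall power of $k$ that is not tied up inside $\bar n$ works out to
\[
\tfrac{k+1}{2} + 2N + \tfrac{E-1}{2} - \tfrac{k(k-1)}{4} = 2N + k,
\]
while the $\bar n$ exponent becomes $N + k/2$. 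Setting $N = n-k$ then gives a factor $k^{2n-k}\,\bar n^{\,n-k/2}\,e^{k-n}$ together with assorted $(2\pi)$'s and the $\sqrt{2/(k+1)}$.

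Next I would substitute $\bar n = 2(n-k)/(k(k+1))$ into $\bar n^{\,n-k/2}$. This produces a factor $2^{n-k/2}(n-k)^{n-k/2}(k+1)^{-(n-k/2)}$ along with a compensating $k^{-(n-k/2)}$ that combines with $k^{2n-k}$ to give precisely $k^{n-k/2}$, which is the desired exponent. The remaining powers of $2$ consolidate as $2^{k/2}\cdot 2^{n-k/2}\cdot 2^{1/2} = 2^{n+1/2}$, while the $(2\pi)^{k/2}$ furnishes the $\pi^{k/2}$ factor. What remains is exactly the stated expression
\[
R_{n,k} \simeq (k+1)^{-1/2}\,2^{n+1/2}\,e^{k-n}\,k^{n-k/2}\,\pi^{k/2}\,\left(\tfrac{n-k}{k+1}\right)^{n-k/2}.
\]

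The hard work has really already been done in setting up \eqref{eqBG} and in the spectral computation leading to Corollary \ref{CorBG}; the only obstacle here is an accurate accounting of the exponents, where one must be careful that the $E$-dependent and $k$-dependent contributions from $\mathcal{C}_0$, from the Gaussian prefactor $(2\pi k\bar n)^{(E-1)/2}$, and from $\det\mathbf{M}$ conspire to cancel the otherwise ``wrong'' powers of $k$ and $\bar n$. I would present the computation in two clean stages (first separating $\bar n$ from $k$ in the assembled formula, then substituting the explicit value of $\bar n$) so that the cancellations are transparent and the final regrouping into the target form is immediate.
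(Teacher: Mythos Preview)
Your proposal is correct and follows exactly the same route as the paper's own proof, which simply states that the result follows from $R_{n,k}=\Psi_{n-k,k}$, Equation~\eqref{eqBG}, and Corollary~\ref{CorBG}. You have merely spelled out the exponent bookkeeping that the paper leaves to the reader, and your arithmetic checks out.
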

\begin{proof}
Using the fact that $R_{n,k}=\Psi_{n-k,k}$, and using Equation
(\ref{eqBG}), in conjunction with the result of Corollary \ref{CorBG},
the result follows.
\end{proof}

We will now show that $R_{n,k}$ is tightly peaked about a mean, whose
value we shall determine.
\begin{theorem}
  In the $n\to\infty$ limit, the distribution of short chords in
  crystallized diagrams is normal with mean given by
  \begin{equation}\label{kbar}
 \bar k \simeq \sqrt{\frac{2n}{\log n}}.
\end{equation}
\end{theorem}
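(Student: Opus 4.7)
The plan is to locate the mode $\bar k$ of the distribution by maximizing the asymptotic expression for $R_{n,k}$ given in Theorem \ref{ThmRasym} with respect to $k$, and then to use the same expression to verify sharp concentration and asymptotic normality via a local quadratic expansion.

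First, I would take logarithms in Theorem \ref{ThmRasym} and collect the result in the form
\begin{equation}\nonumber
\log R_{n,k} \simeq (n+\tfrac{1}{2})\log 2 + (k-n) + \tfrac{k}{2}\log\pi
- \tfrac{1}{2}\log(k+1) + \bigl(n-\tfrac{k}{2}\bigr)\log\frac{k(n-k)}{k+1}.
\end{equation}
Anticipating that the maximizer satisfies $1\ll k\ll n$, I would expand $\log\frac{k(n-k)}{k+1}=\log n +\log(1-k/n) - \log(1+1/k)$ and keep the terms that depend on $k$ to leading order. Multiplying $(n-k/2)$ by this expansion, the contribution of the $\log n$ piece is $-(k/2)\log n$, the contribution of $-1/k$ is $-n/k$, and the contribution of $-k/n$ is $O(k)$ absorbable into the linear term. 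The resulting dominant $k$-dependent part of $\log R_{n,k}$ is therefore
\begin{equation}\nonumber
g(k) := -\frac{n}{k} - \frac{k}{2}\log n + k\bigl(1+\tfrac{1}{2}\log\pi\bigr),
\end{equation}
with all omitted terms genuinely smaller as $n\to\infty$ in the regime $k=\Theta(\sqrt{n/\log n})$.

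Next, I would set $g'(k)=n/k^2-\tfrac{1}{2}\log n+1+\tfrac{1}{2}\log\pi=0$. The dominant balance $n/k^2=\tfrac{1}{2}\log n(1+o(1))$ immediately yields the claimed mode
\begin{equation}\nonumber
\bar k = \sqrt{\frac{2n}{\log n}}\bigl(1+o(1)\bigr),
\end{equation}
matching Equation \eqref{kbar}. Differentiating once more, $g''(k)=-2n/k^3$, so the local Gaussian width is $\sigma^2=-1/g''(\bar k)\sim \sqrt{2n}/(\log n)^{3/2}$, giving $\sigma/\bar k \sim (\log n)^{1/4}(2n)^{-1/4}\to 0$. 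This already establishes sharp peaking. For the normality claim, I would estimate the higher derivatives $g^{(p)}(k) = O(n/k^{p+1})$, so at $k=\bar k$ one has $g^{(p)}(\bar k)\cdot\sigma^p = O\bigl(n^{(2-p)/4}(\log n)^{(2-p)/4}\bigr)$, which tends to $0$ for every $p\geq 3$. Hence the Taylor expansion of $\log R_{n,k}$ about $\bar k$ is dominated by the quadratic term on scales of order $\sigma$, and the distribution is asymptotically normal.

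The main obstacle is not computational but consists of controlling which pieces of $\log R_{n,k}$ actually determine $\bar k$. The leading $n\log n$ contribution is $k$-independent and so irrelevant, while the naively ``large'' terms $(n-k/2)[\log k+\log(n-k)]$ largely cancel against $-(n-k/2)\log(k+1)$ due to the combination $\log\frac{k(n-k)}{k+1}\approx \log n -1/k$ for $k\ll n$. The subleading $-n/k$ piece produced by this cancellation is precisely what balances against $-\tfrac{k}{2}\log n$ to generate the anomalous $\sqrt{2n/\log n}$ scale, and one has to argue carefully that no other terms drop into this window. Once this accounting is made explicit, the rest of the argument is a standard Laplace-method verification of normality.
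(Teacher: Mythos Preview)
Your proposal is correct and follows essentially the same route as the paper: take logarithms in Theorem \ref{ThmRasym}, extract the leading $k$-dependent terms $n/k^2-\tfrac{1}{2}\log n$ in the derivative to locate $\bar k$, and then use the higher derivatives $\partial^p \log R_{n,k}/\partial k^p = O(n\,\bar k^{-p-1})$ to verify that the standardized moments beyond the second vanish. Your bookkeeping of how the large pieces of $\log R_{n,k}$ cancel down to the $-n/k$ term is more explicit than the paper's, and your linear coefficient $1+\tfrac{1}{2}\log\pi$ should in fact be $\tfrac{1}{2}\log\pi$ (the $+k$ from $e^{k-n}$ is cancelled by the $-k$ from $(n-k/2)(-k/n)$), but this is subdominant to $-\tfrac{k}{2}\log n$ and does not affect the leading-order conclusions.
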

\begin{proof}  
Using the result of Theorem \ref{ThmRasym}, and keeping only leading
contributions in the limit of large $n$ and $k$, we find
\begin{equation}\label{expR}
  \frac{\partial \log R_{n,k}}{\partial k} \simeq \frac{n}{k^2}
  -\frac{1}{2}\log n.
\end{equation}
Setting this to zero and solving for $k$, we find the mean value of
$k$ given in Equation (\ref{kbar}). The variance of the distribution
is then obtained as follows
\begin{equation}\nonumber
  \text{Var}(k)\simeq-\left(\frac{\partial^2 \log R_{n,\bar k}}{\partial k^2} \right)^{-1}
  \simeq \frac{{\bar k}^3}{2n} \simeq \sqrt{\frac{2n}{\log^3 n}}.
\end{equation}
As a proportion of the full range of possible values for $k$,
i.e. $k\in[1,n]$, both $\bar k$ and $\text{Var}(k)$ tend to zero as
$n\to\infty$. The distribution therefore consists of a sharp peak near
zero, see Figure \ref{FigRs}. To establish normality of $k$, we note that
\begin{equation}\nonumber
  \frac{\partial^p \log R_{n,\bar k}}{\partial k^p} =
       {\cal O}\left( n^{(1-p)/2}\log^{(p+1)/2}n \right),
\end{equation}
while
\begin{equation}\nonumber
\left(\text{Var}(k)\right)^{-p/2} = {\cal O}\left( n^{-p/4}\log^{3p/4}n \right),
\end{equation}
and so decreases slower, which implies that the higher standardized
moments of $k$ are suppressed as $n\to\infty$.
\end{proof}

In practice, it is difficult to calculate $R_{n,k}$ for values of $n$
large enough to make the asymptotic expression in Theorem
\ref{ThmRasym} a good approximation: the Gaussian integration we
performed was contingent on $2N/(k(k+1)) = 2(n-k)/(k(k+1))$ being
large. Given our expression for $\bar k$, this implies that $\log n
\gg 1$, which implies that very large values of $n$ are
required. Subleading corrections to $\bar k$ may be obtained by
expanding Equation (\ref{expR}) to the next order
\begin{equation}\nonumber
  \frac{\partial \log R_{n,k}}{\partial k} \simeq \frac{n}{k^2}
  -\frac{n}{k^3} -\frac{1}{2k}-\frac{1}{2}\log \left(\frac{n}{\pi}\right)=0.
\end{equation}
Solving this cubic equation in $k$ and then taking the large-$n$ limit, one obtains
\begin{equation}\label{expRc}
\bar k \simeq \sqrt{\frac{2n}{\log (n/\pi)}} -\frac{1}{2} -\frac{1}{2\log(n/\pi)}.
\end{equation}
In Figure \ref{FigKmean}, we show the agreement between this
expression and the actual mean value of $k$.
\begin{figure}[H]
\begin{center}
  \includegraphics[width=3.5in]{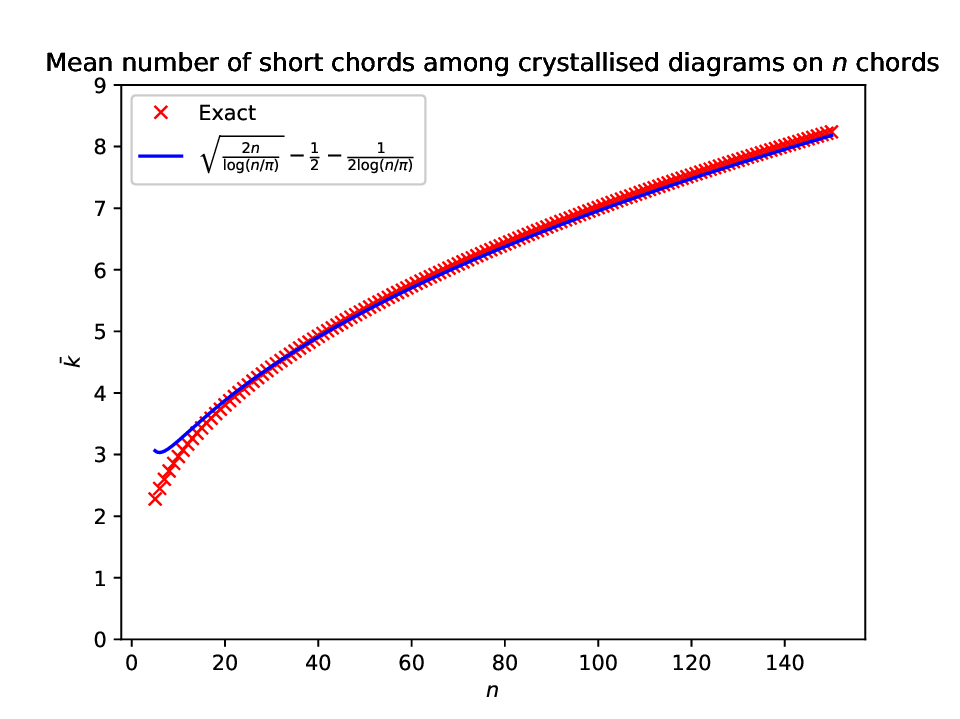}
\end{center}
\caption{The mean value of the number of short chords $\bar k$,
  counted across all crystallized diagrams consisting of $n$ chords,
  for various values of $n$. The blue line is the asymptotic
  expression given in Equation (\ref{expRc}). }
\label{FigKmean}
\end{figure}
\begin{figure}[H]
\begin{center}
  \includegraphics[width=3.1in]{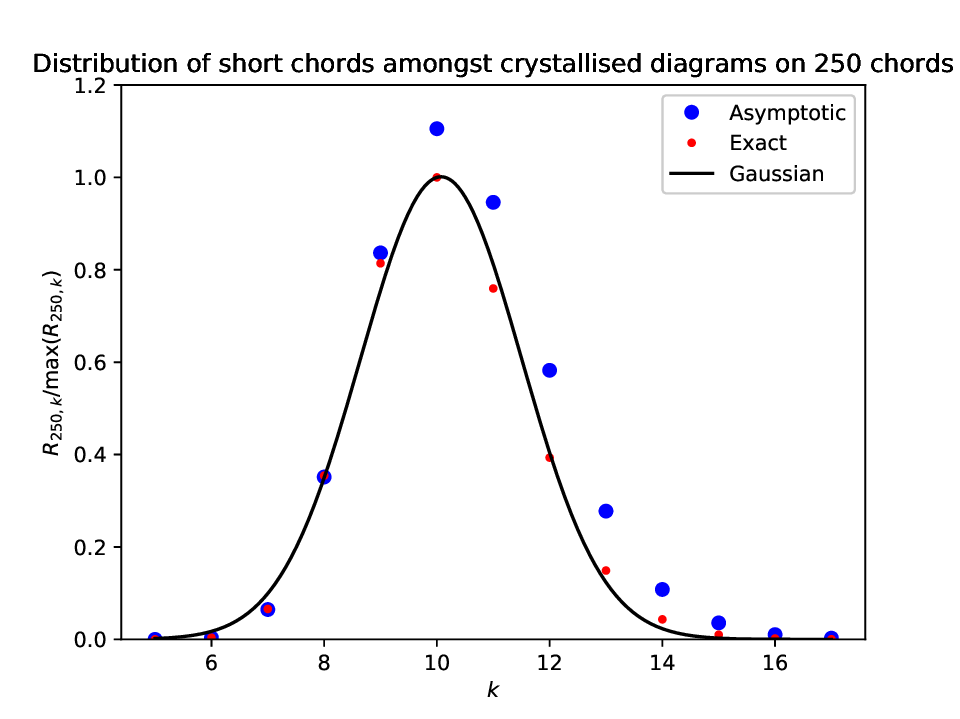}
  \includegraphics[width=3.1in]{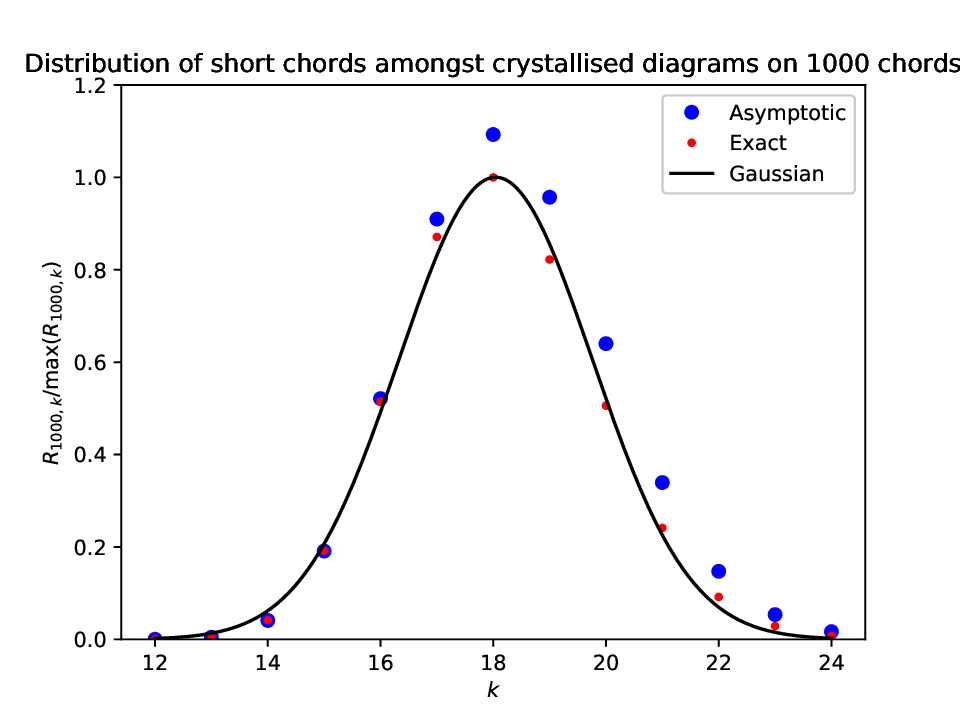}
\end{center}
\caption{The distribution of short chords in crystallized diagrams
  built from $n=250$ chords (left) and $n=1000$ chords (right). In red
  is the exact data, normalized by the largest value
  $\max_k(R_{n,k})$. In blue the asymptotic expression from Theorem
  \ref{ThmRasym} is shown, normalized by the same value. In black a
  Normal curve with mean $\bar k$ given by Equation (\ref{expRc}), and
  variance given by $\bar k^3/(2n)$ has been plotted and normalized in
  such a way that it passes through $\max_k(R_{n,k})$. One can see
  that the exact data is close to Normal, and that the asymptotic
  expression is slowly converging to it (the blue dots are closer to
  the red dots in the figure on the right).}
\label{FigRs}
\end{figure}
\noindent Although the distribution of short chords is already quite close to a
Normal distribution even for modest values of $n\simeq 250$, the
asymptotic form of the distribution given in Theorem \ref{ThmRasym} is
slow to converge to it, see Figure \ref{FigRs}.

Turning our attention to the distribution of bubble sizes, and using the fact that
\begin{equation}\nonumber
  C_{n,k,q} = (k+1) \frac{(2n-k-q-1)!}{(2n-k-2q-1)!}\, \Psi_{n-k-q,k-1},
\end{equation}
we use the Stirling approximation on the factorials to achieve an
expression valid for $n \gg k \simeq q \gg 1$:
\begin{align}\nonumber
  C_{n,k,q}\simeq k^{-1/2}\,&2^{n-q-1/2}\, e^{k-n}\,
  (k-1)^{n-q-(k+1)/2} \,(k+1)\,\pi^{(k-1)/2}\,
  \left(\frac{n-k-q}{k}\right)^{n-q-(k+1)/2}\\\nonumber
  & (2n-2q-k-1)^{1/2+k-2n+2q}\,(2n-q-k-1)^{-1/2-k+2n-q}.
\end{align}
In order to find the mean values of $k$ and $q$, we consider the
equations
\begin{equation}\nonumber
  \frac{\partial \log C_{n,k,q}}{\partial k} \simeq \frac{n}{k^2}
  -\frac{1}{2}\log n = 0, \qquad
  \frac{\partial \log C_{n,k,q}}{\partial q} \simeq \frac{1}{k}
  -\frac{q}{2n} = 0,
\end{equation}
whose solutions give the same value of $\bar k$ as we found in
Equation (\ref{kbar}), and also
\begin{equation}\nonumber
\bar q \simeq \frac{2n}{\bar k} \simeq \sqrt{2n\log n}. 
\end{equation}
This makes sense -- we expect the $\bar k$ short chords to give rise
to $\bar k+1\simeq \bar k$ bubbles, each of size $\bar q\simeq 2n/\bar
k$, on average.

\section{Open questions}

In order to form crystallized diagrams from general linear chord
diagrams, one could consider a crystallization process as follows:
\begin{enumerate}
\item At each time step an endpoint $i$ of a non-short chord is chosen
  uniformly at random.

\item The neighbouring endpoint $j=i-1$ or $j=i+1$ is also
chosen at random (if $i=1$ or $2n$ there is a unique choice of
neighbour).

\item If $j$ is not the endpoint of a short chord, its position is
  swapped with $i$.

\item The process is stopped when all bubbles are empty.
\end{enumerate}
\noindent There are two immediate questions which arise from
considering such a process:

\begin{question}
If the process is applied to general linear chord diagrams chosen
uniformly, does it generate crystallized diagrams with the
same asymptotic distribution of short chords found in Theorem
\ref{ThmRasym}?
\end{question}

\begin{question}
What is the asymptotic distribution of stopping times for the
crystallization process, taken across all linear chord diagrams
uniformly?
\end{question}

The concept of a short chord extends readily to graphs other than the
path of length $2n$, cf.\ Young \cite{DY1}\cite{DY2}, and it would be
interesting to count bubbles in other graphs, especially in the
rectangular grid graphs. There, a bubble would be defined as a
two-dimensional region surrounded short chords, and a generalised
crystallization process could also be considered.

Finally, we note the appearance of an interesting coincidence:

\begin{question}
In the $n\to\infty$ limit, the average linear crystallized diagram
consists of $\bar k \simeq \sqrt{2n/\log n}$ bubbles, and therefore
$\simeq \bar k^2/2$ ``channels'' of bridges connecting them, with each
channel consisting of $\simeq \log n$ bridges joining the same two
bubbles. It is curious to note that this number of channels is
coincident with the leading asymptotics of the prime counting
function, i.e. $\bar k^2/2 \simeq n/\log n$. It would be nice to
understand if there is a connection here to the distribution of the
primes.
\end{question}

\bigskip
\hrule
\bigskip

\noindent {\it 2010 Mathematics Subject Classification:} 
Primary 05A15; Secondary 05C70, 60C05. 

\noindent \emph{Keywords:} 
chord diagram, perfect matching. 
\bigskip
\hrule
\bigskip

\noindent (Concerned with sequences \seqnum{A079267},
\seqnum{A278990}, \seqnum{A367000}, \seqnum{A375504},
\seqnum{A375505}.)

\end{document}